\newtheoremstyle{theorem}{1em}{1em}{\slshape}{0pt}{\bfseries}{.}{ }{}
\theoremstyle{theorem}
\newtheorem{theorem}{Theorem}
\newtheorem*{theorem*}{Theorem}
\newtheorem{corollary}[theorem]{Corollary}
\newtheorem{lemma}[theorem]{Lemma}
\newtheorem*{claim*}{Claim}
\newtheorem{conjecture}[theorem]{Conjecture}
\newtheorem*{conjecture*}{Conjecture}
\newtheorem*{problem*}{Problem}
\newtheorem{definition}{Definition}
\newtheorem*{definition*}{Definition}
\theoremstyle{remark}
\newtheorem*{remark*}{Remark}
\newtheorem*{algorithm*}{Algorithm}
\newenvironment{proofofclaim}{\vspace{1ex}\noindent{\emph{Proof of claim.}}\hspace{0.5em}}
   	    {\hfill$\lozenge$\vspace{1ex}}
\providecommand{\setN}{\mathbb{N}}
\providecommand{\setZ}{\mathbb{Z}}
\providecommand{\setQ}{\mathbb{Q}}
\providecommand{\setR}{\mathbb{R}}
\newcommand{\vol}{\textrm{vol}} 
\newcommand{\conv}{\textrm{conv}}
\newcommand{\rk}{\textrm{rk}} 
\newcommand{\CFenc}{\mathbf{CF}^{\textrm{enc}}} 
\newcommand{\CF}{\mathbf{CF}} 
\newcommand{\proj}{\textrm{proj}} 
\newcommand{\xc}{\textrm{xc}} 
\def\placeIIID#1#2#3#4{
           \FPeval{\x}{(#1) + 0.3*(#2)} 
           \FPeval{\y}{0.5*(#2) + (#3)} 
           \rput[c](\x,\y){#4}
}
\def\placeProj#1#2#3#4#5{
           \FPeval{\x}{(#1) + 0.3*(#2)} 
           \FPeval{\y}{0.5*(#2) + (#3)} 
           \pnode(\x,\y){#4}
           \FPeval{\x}{(#1) + 0.3*(#2)} 
           \FPeval{\y}{0.5*(#2)} 
           \pnode(\x,\y){#5}
}
\begin{document}

\title{Some $0/1$ polytopes need exponential size extended formulations}

\author{Thomas Rothvoß\thanks{Supported by the Alexander von Humboldt Foundation within the Feodor Lynen program.}\vspace{3mm}\\M.I.T. \\
{ \tt{rothvoss@math.mit.edu}} }

\maketitle

\begin{abstract}
\noindent We prove that there are $0/1$ polytopes $P \subseteq \setR^n$ that do not
admit a compact LP formulation. More precisely we show that for every $n$ 
there is a sets $X \subseteq \{ 0,1\}^n$
such that $\conv(X)$ must have extension complexity 
at least $2^{n/2\cdot(1-o(1))}$. In other words, every polyhedron $Q$ that can 
be linearly projected on $\conv(X)$ must have exponentially many facets.

In fact, the same result also applies if $\conv(X)$ is restricted to be a matroid polytope. 

Conditioning on $\mathbf{NP} \not\subseteq \mathbf{P_{/poly}}$, our result
rules out the existence of any compact formulation for the TSP polytope, 
even if the formulation may contain arbitrary real numbers. 
\end{abstract}

\section{Introduction}

Combinatorial optimization deals with finding the best solution out of a finite number of choices $X \subseteq \{0,1\}^n$, 
e.g. finding the cheapest spanning tree in a graph. If possible one aims of course to design a polynomial time algorithm. 
However another popular way to study combinatorial problems is to express the convex hull $P = \conv(X)$ by linear
inequalities $Ax \leq b$, i.e. describing them as the solutions of a linear program. A drawback of this approach is
that in general an exponential number of inequalities is needed. In principle one could use the Ellipsoid 
method to optimize these systems, if at least the separation problem can be solved in polynomial time. But in practice this method is 
considered to be not applicable. A more satisfactory approach is to allow polynomially many extra variables 
in order to reduce the number of necessary inequalities to a polynomial. This is called a
\emph{compact formulation} $P = \{ x \mid \exists y : Ax + Uy \leq b\}$. Such compact formulations exist for example for 
the spanning tree polytope~\cite{CF4SpanningTree-KippMartin1991}, the parity polytope and 
the permutahedron (see \cite{CombinatorialOptimizationABC-Schrijver} for an extensive account).

The advantages of such a compact formulation are that (1) one can now optimize any linear function 
over $X$ in polynomial time; 
(2) one can solve the problem with a powerful general purpose LP solver, without the need to implement
a custom-tailored algorithm. 

This naturally leads to the question for which problems such a compact formulation does \emph{not} exist.
Yannakakis~\cite{ExpressingCOproblemsAsLPs-Yannakakis1991} showed that 
the TSP polytope $P_{\textrm{TSP}}$ (the convex full of the characteristic vectors of all Hamiltonian
cycles in the complete graph on $n$ nodes) does not have a subexponential size 
\emph{symmetric} formulation. Surprisingly the same result holds true for the matching
polytope, though here a complete description of all facets is known due to Edmonds~\cite{matchingPolytop-Edmonds1965} 
and the problem itself as well as the separation problem are solvable in polynomial time. 
Kaibel, Pashkovich
and Theis~\cite{SymmetryMatters-KaibelPashkovichTheis-IPCO2010} demonstrate that 
symmetric formulations are in some cases more restricted by  proving that there is a compact non-symmetric 
formulation for all $\log n$-size matchings, while symmetric formulations still need size $n^{\Omega(\log n)}$.

However, it remains a fundamental open problem to show that the matching polytope or 
the TSP polytope do not admit any  non-symmetric compact formulation. 
In fact, it was even an open problem to prove that there 
\emph{exists} any family of $0/1$ polytopes without a compact formulation\footnote{This was posed as
an open problem by Volker Kaibel on the 1st Carg{è}se Workshop in Combinatorial Optimization.}. 
In this paper we answer this question affirmatively.

Our idea is based on a counting argument similar to Shannon's 
theorem~\cite{CircuitSizesShannon1949} (see also \cite{ComputationalComplexity-AroraBarak2009}) for lower bounds on
circuit sizes: Let us assume for the sake of contradiction that 
all $n$-dimensional 0/1 polytopes have a
compact formulation $P = \{ x \mid \exists y \geq \mathbf{0}: Ax + Uy = b \}$
of polynomial size  $r(n)$.
Since there are doubly-exponentially many 0/1 polytopes, there must also be 
at least that many formulations of size $r(n)$. This would lead to a contradiction
under the additional
assumption that all coefficients in the system $Ax + Uy = b$ have polynomial encoding length. 
Unfortunately there is no known result which guarantees that the coefficients of $U$ will even be rational 
and already a single real number can contain an infinite amount of information\footnote{Note that
the usual argument that a polytope with rational vertices admits rational inequalities and vice versa does 
not apply, since both, the vertices and the inequalities of the extension polyhedron might be irrational.} ruling out a simple counting 
argument.
 
\subsection*{Our contribution}

In our approach, we bypass these difficulties by selecting a linearly independent subsystem of $Ax + Uy = b$ which maximizes the volume of the spanned parallelepiped; 
then we discretize the entries of $U$. We thus obtain a subsystem  $\bar{A}x + \bar{U}y = \bar{b}$
with the property that $x \in X$ if and only if there
is a short certificate $y$ such that $\bar{A}x + \bar{U}y \approx \bar{b}$ for the rounded system.
Secondly, all numbers in $\bar{A},\bar{U},\bar{b}$ have an encoding length 
which is bounded by a polynomial in $n$. 
In other words, this construction defines an injective map, taking a set $X$ as
input and providing  $(\bar{A},\bar{U},\bar{b})$. 
Since there are doubly-exponentially many sets $X \subseteq \{ 0,1\}^n$ and by injectivity, 
the number of such systems $(\bar{A},\bar{U},\bar{b})$ must also
be doubly-exponential, which then implies the result.

It is folklore, that if $\mathbf{NP}$ problems do not all have polynomial size 
circuits, then the TSP polytope does not admit a compact formulation in which 
the numbers are rationals with polynomial encoding length. We can argue that
the latter condition can be omitted.

\section{Related work}

A formulation of size $O(n \log n)$ for the permutahedron was provided by Goemans~\cite{PermutahedronNlogNformulation_Goemans10}. 
In fact, \cite{PermutahedronNlogNformulation_Goemans10} also showed that this is tight up to constant factors. 
The lower bound of~\cite{PermutahedronNlogNformulation_Goemans10} is based on the insight that the number of facets of any extension must be at least logarithmic in the
number of vertices of the target polytope (which is $n!$ for the permutahedron). 
The perfect matching polytope for planar graphs and graphs with bounded genus does admit a compact formulation~\cite{CompactForm4MatchingPolytopeInPlanarGraphs-Baharona1993, CompactLPforPerfectMatchingOnBoundedGenusSurfacesGerards1991}.
A useful tool to design such formulations is the Theorem of Balas~\cite{DisjunctiveProgramming-Balas85,DisjunctiveProgramming-Balas98}, which describes the convex hull of the
union of polyhedra. 
For $\mathbf{NP}$-hard problems, one can of course not expect the existence of any \emph{exact} compact formulation.
Nevertheless, Bienstock~\cite{ApproximateKnapsackFormulation-Bienstock2008} gave an approximate formulation of size $n^{O(1/\varepsilon)}$ for the Knapsack polytope. 
This means, optimizing any linear function over the approximate polytope will give the optimum Knapsack value, up
to a $1+\varepsilon$ factor.  
For a more detailed literature review, we refer to the surveys of Conforti, Cornu{é}jols and Zambelli~\cite{ExtendedFormulationsSurvey-ConfortiCornuejolsZambelli-2010} and of Kaibel~\cite{ExtendedFormulationsInCombOptKaibel2011arXiv}.



\section{Preliminaries}

Let $P \subseteq \setR^n$ be a polytope with non-redundant inequality representation  $P  = \{ x \in \setR^n \mid Ax \leq b\}$.
An \emph{extension} is a polyhedron $Q \subseteq \setR^m$ together with a linear
projection $p : \setR^m \to \setR^n$ such that $p(Q) = P$. An \emph{extended formulation} is a description
of $Q$ with linear inequalities and equations $Q = \{ z \in \setR^m \mid Cz \leq c, \; Dz = d \}$ (together with $p$). 
The \emph{size} of the extended formulation
is the number of inequalities in the description, i.e. the number of rows in $C$.
We do not need to account for the number of equations, since they can always be eliminated.
Now we can define the \emph{extension complexity} $\xc(P)$ as the smallest size of any extended formulation
(see \cite{ExtendedFormulationsInCombOptKaibel2011arXiv} for more details).

Let $X = \{ x_1,\ldots,x_v \} \subseteq P$ be the \emph{vertices} (or \emph{extreme points}) of $P$ 
and let $f$ be the number of inequalities in the description $P  = \{ x \in \setR^n \mid Ax \leq b\}$. 
Then the \emph{slack-matrix} $S\in \setR^{f× v}$ of $P$ is defined 
by $S_{ij} = b_i - A_i x_j$. Recall that the \emph{rank} of a matrix $S$ is the smallest
$r$ such that one can factor $S = UV$, where $U$ is a matrix with $r$ columns and $V$ 
is a matrix with $r$ rows. 
A notion which is very important for studying extended formulations is the \emph{non-negative rank} of a matrix:
\[
 \rk_+(S) = \min\{ r \mid \exists U \in \setR_{\geq 0}^{f× r}, V \in \setR_{\geq 0}^{r× v}: S = UV \}
\]
Note that given a matrix $A \subseteq \setQ_{\geq }^{m × n}$, deciding whether $\rk(A) = \rk_+(A)$ is $\mathbf{NP}$-hard~\cite{ComplexityOfNonnegativeMatrixFactorizationVavasis09}.
A basic theorem concerning extended formulations, is the insight of Yannakakis, 
that the non-negative factorization of the slack-matrix with minimum $r$ gives the smallest extension:
\begin{theorem}[Yannakakis~\cite{ExpressingCOproblemsAsLPs-Yannakakis1991}] \label{thm:Yannakakis}
Let $P$ be a polytope with vertices $X = \{ x_1,\ldots,x_v\}$, non-redundant inequality
description $P=\{ x \in \setR^n \mid Ax \leq b\}$ and corresponding slack matrix $S$.
Then  $\xc(P) = \rk_+(S)$. 
Moreover, for any factorization $S = UV$ with $U,V \geq \mathbf{0}$ one can write
 $P = \{ x \in \setR^n \mid \exists y \geq \mathbf{0}: Ax + Uy = b \}$ and for every $x_j \in X$ one has $Ax_j + U\cdot V^j = b$.
\end{theorem}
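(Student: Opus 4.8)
The plan is to prove the two inequalities $\xc(P)\le \rk_+(S)$ and $\xc(P)\ge \rk_+(S)$ separately, with the first direction simultaneously establishing the ``moreover'' part.

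For $\xc(P)\le \rk_+(S)$, I would start from an arbitrary non-negative factorization $S=UV$ with $U\in\setR_{\ge0}^{f\times r}$ and $V\in\setR_{\ge0}^{r\times v}$, and take as candidate extension $Q=\{(x,y)\in\setR^n\times\setR^r \mid Ax+Uy=b,\ y\ge\mathbf{0}\}$ with the coordinate projection $p(x,y)=x$. The inclusion $p(Q)\subseteq P$ is immediate, since $(x,y)\in Q$ gives $Ax=b-Uy\le b$ because $U,y\ge\mathbf{0}$. For the reverse inclusion, given $x\in P$ write $x=\sum_j\lambda_j x_j$ with $\lambda\ge\mathbf{0}$, $\sum_j\lambda_j=1$, and set $y:=V\lambda=\sum_j\lambda_j V^j\ge\mathbf{0}$. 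Since the $j$-th column of $S$ is exactly $b-Ax_j$ and $S=UV$, one gets $UV^j=b-Ax_j$, hence $Ax_j+UV^j=b$ for every $j$ (this is the last assertion of the theorem), and therefore $Ax+Uy=\sum_j\lambda_j(Ax_j+UV^j)=\sum_j\lambda_j b=b$, so $(x,y)\in Q$ projects onto $x$. Thus $P=p(Q)$ and $\xc(P)\le r$ for every such $r$, i.e. $\xc(P)\le\rk_+(S)$.

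For $\xc(P)\ge\rk_+(S)$, take an extension of minimum size; after eliminating equations it can be written as $Q=\{z\in\setR^m\mid Cz\le c\}$ with $g:=\xc(P)$ rows of $C$, together with a linear projection $p(z)=Mz$ satisfying $p(Q)=P$. Since $P\neq\emptyset$, also $Q\neq\emptyset$. For each vertex $x_j$ fix a preimage $z_j\in Q$ with $Mz_j=x_j$ and let the $j$-th column of $V$ be the slack $V^j:=c-Cz_j\ge\mathbf{0}$. For each inequality $A_ix\le b_i$ of $P$, the affine functional $z\mapsto A_iMz$ is bounded above by $b_i$ on $Q$, so by LP duality (the affine Farkas lemma, using $Q\neq\emptyset$) there is $u_i\in\setR_{\ge0}^{g}$ with $u_i^{\top}C=A_iM$ and $u_i^{\top}c\le b_i$; collect the $u_i^{\top}$ as the rows of a non-negative matrix $U$. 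Then $(UV)_{ij}=u_i^{\top}(c-Cz_j)=u_i^{\top}c-A_iMz_j=u_i^{\top}c-A_ix_j\le b_i-A_ix_j=S_{ij}$.

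The one genuinely delicate point is upgrading this entrywise inequality to the identity $S=UV$, and this is where non-redundancy of the description is used: each inequality $A_ix\le b_i$ is tight at some $x^{\ast}\in P$, so picking $z^{\ast}\in Q$ with $Mz^{\ast}=x^{\ast}$ yields $b_i=A_ix^{\ast}=u_i^{\top}Cz^{\ast}\le u_i^{\top}c\le b_i$, forcing $u_i^{\top}c=b_i$ and hence $(UV)_{ij}=S_{ij}$ for all $i,j$. Then $S=UV$ is a non-negative factorization of inner dimension $g=\xc(P)$, so $\rk_+(S)\le\xc(P)$, completing the proof. The arithmetic in the first direction and the column-bookkeeping for $S$ are routine; the substantive ingredients are Farkas' lemma (to realize each facet inequality of $P$ as a non-negative combination of the inequalities of $Q$) and non-redundancy (which is exactly what excludes $u_i^{\top}c<b_i$ and makes the factorization exact rather than merely dominating $S$). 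If $p$ is permitted to be affine instead of linear, the same argument works verbatim after absorbing the translation into the constants.
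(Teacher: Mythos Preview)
The paper does not actually prove Theorem~\ref{thm:Yannakakis}; it is stated as a known result attributed to Yannakakis~\cite{ExpressingCOproblemsAsLPs-Yannakakis1991} and is invoked as a black box in the rest of the argument. So there is no ``paper's own proof'' to compare against.

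Your proof is correct and is essentially the standard argument one finds in the literature. Both directions are handled cleanly: for $\xc(P)\le\rk_+(S)$ the extension $Q=\{(x,y)\mid Ax+Uy=b,\ y\ge\mathbf{0}\}$ has exactly $r$ inequalities (the equations do not count), and your verification that $\proj_x(Q)=P$ via $y=V\lambda$ is the right one; the identity $Ax_j+UV^j=b$ is read off directly from $S^j=b-Ax_j$. For $\rk_+(S)\le\xc(P)$, the construction of $U$ via Farkas and of $V$ via slacks of chosen preimages is standard; your observation that non-redundancy is precisely what forces $u_i^{\top}c=b_i$ (rather than merely $\le$) is exactly the point that makes the factorization exact. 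The only remark worth adding is that in the duality step the primal $\max\{A_iMz\mid Cz\le c\}$ is indeed finite (since it equals $\max\{A_ix\mid x\in P\}$ and $P$ is a polytope), so one could also get $u_i^{\top}c=b_i$ directly from strong LP duality without the separate tightness argument; but your Farkas-plus-tightness route is equally valid and makes the role of non-redundancy more transparent.
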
 
In other words: Given a polytope $P = \{ x \in \setR^n \mid Ax \leq b\}$, the smallest extension can be found by
factoring the slack matrix $S$ into non-negative factors $U$ and $V$ with minimum number of columns/rows. 
Then the smallest extended formulation comprises of 
$Q = \{ (x,y) \in \setR^{n} × \setR^{\xc(P)} \mid Ax + Uy = b, \; y \geq \mathbf{0}\}$ together with the 
\emph{projection on the $x$-variables} $\proj_x(Q) = \{ x \in \setR^n \mid \exists y: (x,y) \in Q\}$.
While for a polytope $P$, the inequality description $Ax \leq b$ is not unique,
Theorem~\ref{thm:Yannakakis} implies that the non-negative rank is the 
same for all these descriptions. 



For any matrix $A$, we denote its $i$th row by $A_i$ and the
$i$th column by $A^i$.
For linearly independent vectors $w_1,\ldots,w_k \in \setR^n$, we define 
$\vol(w_1,\ldots,w_k)$ as the $k$-dimensional volume of the parallelepiped, 
spanned by $w_1,\ldots,w_k$. Hence for $k=n$ one has 
$\vol(w_1,\ldots,w_k) = |\det(B)|$ where $B$ is a matrix, having $w_1,\ldots,w_k$
as column vectors in an arbitrary order. Note that for any vector $w \in span(w_1,\ldots,w_k)$,
there are unique coefficients $\lambda \in \setR^k$ such that
$w = \sum_{i=1}^k \lambda_iw_i$ and by Cramer's rule 
\[
  |\lambda_i| =  \frac{\vol(w_1,\ldots,w_{i-1},w,w_{i+1},\ldots,w_k)}{\vol(w_1,\ldots,w_k)}.
\]
For $q \in \setR$, let $q\setZ_{\geq 0} = \{ 0,q,2q,\ldots\}$ denote all non-negative integer multiples of $q$.

\section{A lower bound for general 0/1 polytopes}

In the following we fix a set  $X\subseteq \{ 0,1\}^n$.  
It is well known, that one can choose a matrix $A$ and a vector $b$ with integral entries such that
$P = \{ x \in \setR^n \mid Ax \leq b \} = \conv(X)$, while the
absolute values of any entry in $A$ and $b$ are bounded by $\Delta := \Delta(n) := (\sqrt{n+1})^{n+1} \leq 2^{n \log(2n)}$~(see e.g. Cor.~26 in \cite{LecturesOn0-1Polytopes-Ziegler}). 
Let $S$ be the corresponding slack-matrix, then $S$ is non-negative by definition and integral, 
since $A,b$ and all vertices are integral. More precisely  $S_{ij} = b_j - A_ix_j \in \{ 0,\ldots, (n+1)\Delta\}$. 
Let $S=UV$ be any non-negative factorization, i.e. $U \in \setR_{\geq 0}^{f × r}$ and $V \in \setR_{\geq 0}^{r × v}$.
As already argued above, we cannot make any assumption on the rationality/encoding length of the coefficients of $U$ and $V$. 
But what we can do is to bound their absolute values. 

Observe that if we simultaneously scale a column $\ell$ of $U$ by $\lambda>0$ and row $\ell$ of $V$ by $\frac{1}{\lambda}$,
then the matrix product $UV$ stays invariant. 
Thus we may scale the rows and columns such that $\| U^{\ell}\|_{\infty} = \|V_{\ell}\|_{\infty}$ (if $U^{\ell} = \mathbf{0}$, then
we can just set $V_{\ell} := \mathbf{0}$ as well). We call such pairs of matrices 
\emph{normalized}.

\begin{lemma} \label{lem:NormOfNormalizedMatrices}
For normalized matrices, one has $\|U\|_{\infty} \leq \Delta$ and $\|V\|_{\infty} \leq \Delta$.
\end{lemma}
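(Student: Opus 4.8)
The plan is to exploit the entrywise bound $S_{ij}\le (n+1)\Delta$ on the (integral) slack matrix together with the normalization $\|U^\ell\|_\infty = \|V_\ell\|_\infty$. Suppose for contradiction that some column $U^\ell$ has $\|U^\ell\|_\infty > \Delta$; by normalization the same holds for $\|V_\ell\|_\infty$. Pick indices $i$ with $|U_{i\ell}| = \|U^\ell\|_\infty$ and $j$ with $|V_{\ell j}| = \|V_\ell\|_\infty$. Since $U,V\ge \mathbf 0$, every term in the sum $S_{ij} = \sum_{k=1}^r U_{ik}V_{kj}$ is non-negative, so in particular $S_{ij}\ge U_{i\ell}V_{\ell j} = \|U^\ell\|_\infty\cdot\|V_\ell\|_\infty > \Delta^2$.

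Now I would compare this with the upper bound on $S_{ij}$. We have $S_{ij}\le (n+1)\Delta \le 2^{n\log(2n)}\cdot\Delta$, whereas $\Delta^2 = (\sqrt{n+1})^{2(n+1)} = (n+1)^{n+1}$. For $n\ge 1$ one checks the elementary inequality $(n+1)^{n+1} > (n+1)\cdot(\sqrt{n+1})^{n+1} = (n+1)\Delta$, i.e. $(\sqrt{n+1})^{n+1} > n+1$, which holds already for $n\ge 2$ and can be dispatched directly in the small cases. Hence $S_{ij} > (n+1)\Delta \ge S_{ij}$, a contradiction. Therefore $\|U^\ell\|_\infty \le \Delta$ for every column $\ell$, which is exactly $\|U\|_\infty\le\Delta$, and by normalization $\|V\|_\infty = \max_\ell \|V_\ell\|_\infty = \max_\ell \|U^\ell\|_\infty \le \Delta$ as well.

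The only genuine point of care is the numerical comparison $\Delta^2 > (n+1)\Delta$, i.e. checking that the quadratic blow-up from multiplying the two normalized factors really does overtake the linear-in-$(n+1)$ slack bound; everything else is just bookkeeping with non-negativity. One should also remember the degenerate case $U^\ell = \mathbf 0$, where we set $V_\ell := \mathbf 0$ and the bound holds trivially. I expect no further obstacle.
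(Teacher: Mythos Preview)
Your proposal is correct and follows essentially the same argument as the paper: assume some entry $U_{i\ell}>\Delta$, use normalization to get $V_{\ell j}>\Delta$, and then non-negativity to conclude $S_{ij}\ge U_{i\ell}V_{\ell j}>\Delta^2\ge (n+1)\Delta$, a contradiction. Your version is in fact slightly more careful than the paper in spelling out the numerical inequality $\Delta\ge n+1$ and the degenerate case $U^\ell=\mathbf 0$; note that only the weak inequality $\Delta^2\ge (n+1)\Delta$ is needed (the strict $>$ comes from $U_{i\ell}V_{\ell j}>\Delta^2$), so the $n=1$ case poses no issue.
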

\begin{proof}
Assume for the sake of contradiction that 
 $U_{i\ell} > \Delta$. Thus $\|V_{\ell}\|_{\infty} > \Delta$, hence
there must be an entry $V_{\ell j} > \Delta$. Then $S_{ij} = U_i\cdot V^j \geq U_{i\ell}\cdot V_{\ell j} > \Delta^2 \geq (n+1)\Delta$,
which is a contradiction. 
\end{proof}

Recalling Theorem~\ref{thm:Yannakakis}, we can write
 $\conv(X) = \{ x \in \setR^n \mid \exists y \in \setR^{\xc(\conv(X))}_{\geq 0} : Ax + Uy = b\}$. Our main technical ingredient is to select 
a linear independent subsystem  $\bar{A}x + \bar{U}y = \bar{b}$ of $Ax + Uy = b$
such that the entries of $\bar{U}$ can be rounded to rational numbers with small encoding length and still 
$x \in X$ iff $\bar{A}x + \bar{U}y \approx \bar{b}$ for some $y$.

\begin{theorem} \label{thm:DescriptionOfX}
For any non-empty $X \subseteq \{ 0,1\}^n$, 
there are matrices $\bar{A} \in \setZ^{(n+r) × n}, \bar{U} \in (\frac{1}{4r(n+r)\Delta}\setZ_{\geq 0})^{(n+r) × r}$
and a vector $\bar{b} \in \setZ^{n+r}$ with $\|\bar{A}\|_{\infty}, \|\bar{b}\|_{\infty}, \|\bar{U}\|_{\infty} \leq \Delta$
such that 
\[
  X = \bigg\{ x \in \{ 0,1\}^n \mid \exists y\in [0,\Delta]^r: \| \bar{A}x + \bar{U}y - \bar{b} \|_{\infty} \leq \frac{1}{4(n+r)} \bigg\}
\]
Here is  $r := \xc(\conv(X))$ and $\Delta := \Delta(n) := (\sqrt{n+1})^{n+1}$.
\end{theorem}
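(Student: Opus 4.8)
The plan is to start from the Yannakakis representation $\conv(X)=\{\,x\mid\exists y\ge\mathbf{0}:Ax+Uy=b\,\}$ furnished by Theorem~\ref{thm:Yannakakis}, where $Ax\le b$ is an integral inequality description of $\conv(X)$ with $\|A\|_\infty,\|b\|_\infty\le\Delta$ and $S=UV$ is a \emph{normalized} non-negative factorization of the slack matrix, so that $\|U\|_\infty,\|V\|_\infty\le\Delta$ by Lemma~\ref{lem:NormOfNormalizedMatrices}; here $U$ has $r=\xc(\conv(X))$ columns. Set $M:=[\,A\mid U\,]$ and $\rho:=\rk(M)\le n+r$, and choose an index set $I$ of $\rho$ rows of $M$ that are linearly independent and maximize $\vol$ of the corresponding family of row vectors (a maximizer exists since there are finitely many such $I$). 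I would then take $\bar A$ to be the rows $A_i$, $i\in I$, take $\bar b$ to be the entries $b_i$, $i\in I$, and obtain $\bar U$ by rounding each entry of the rows $U_i$, $i\in I$, \emph{down} to the nearest multiple of $q:=\frac1{4r(n+r)\Delta}$; finally I pad $\bar A,\bar U,\bar b$ with $n+r-\rho$ zero rows/entries. Then $\bar A\in\setZ^{(n+r)\times n}$, $\bar U\in(q\setZ_{\ge0})^{(n+r)\times r}$, $\bar b\in\setZ^{n+r}$; the bounds $\|\bar A\|_\infty,\|\bar b\|_\infty\le\Delta$ are inherited from $A,b$, and $\|\bar U\|_\infty\le\Delta$ because $0\le\bar U_{i\ell}\le U_{i\ell}\le\Delta$.

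For the inclusion ``$X\subseteq\{\dots\}$'' I would use that each $x=x_j\in X$ is a vertex of $\conv(X)$, so Theorem~\ref{thm:Yannakakis} supplies the certificate $y:=V^{j}$ with $Ax+Uy=b$ and $y\in[0,\Delta]^r$ (as $\mathbf0\le V\le\Delta$). Restricting the equality to the rows in $I$, and writing $\hat U$ for the unrounded submatrix, $\bar Ax+\bar Uy-\bar b$ equals $(\bar U-\hat U)y$ on those rows and $\mathbf{0}$ on the padded rows; since each rounding error is $<q$ and each coordinate of $y$ is $\le\Delta$, its $\infty$-norm is at most $rq\Delta=\frac1{4(n+r)}$, as required.

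For ``$\{\dots\}\subseteq X$'', suppose $x\in\{0,1\}^n$ and $y\in[0,\Delta]^r$ satisfy $\|\bar Ax+\bar Uy-\bar b\|_\infty\le\frac1{4(n+r)}$. Reinserting the rounding error first gives $|A_ix+U_iy-b_i|\le\frac1{4(n+r)}+rq\Delta=\frac1{2(n+r)}$ for each $i\in I$. The crucial point is to propagate this bound to \emph{all} rows of $M$: because $I$ has maximal volume, the Cramer-rule identity from the Preliminaries (with the convention that the volume of a linearly dependent family is $0$) gives, for every row index $i$, scalars $\lambda_{ik}$ with $M_i=\sum_{k\in I}\lambda_{ik}M_k$ and $|\lambda_{ik}|\le1$; and since the system $Ax+Uy=b$ is consistent---e.g.\ $Ax_1+UV^1=b$---dotting this row relation with the solution $(x_1,V^1)$ also yields $b_i=\sum_{k\in I}\lambda_{ik}b_k$. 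Reading off the $A$- and $U$-blocks of the relation and combining,
\[
  |A_ix+U_iy-b_i|=\Bigl|\sum_{k\in I}\lambda_{ik}\,(A_kx+U_ky-b_k)\Bigr|\le\rho\cdot\frac1{2(n+r)}\le\frac12\qquad\text{for every }i .
\]
Since $U\ge\mathbf{0}$ and $y\ge\mathbf{0}$, we have $Uy\ge\mathbf{0}$, hence $A_ix-b_i\le\frac12$ for all $i$; as $A$, $b$ and $x$ are integral this forces $A_ix\le b_i$, i.e.\ $x\in\{x'\mid Ax'\le b\}=\conv(X)$. Finally, a $0/1$ point of $\conv(X)$ is a vertex of the cube, hence a vertex of $\conv(X)$, hence lies in $X$; so $x\in X$.

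I expect the main obstacle to be organizing the ``max-volume subsystem'' correctly: one must (i) argue that $\rho$ rows really suffice, which is precisely where consistency of $Ax+Uy=b$ is needed to transport the row relation $M_i=\sum\lambda_{ik}M_k$ to the right-hand side $b$; (ii) verify $|\lambda_{ik}|\le1$ via Cramer's rule with the degenerate (dependent) cases handled; and (iii) check that the error budget closes---the precision $q$ must be small enough that two defects of size $\frac1{4(n+r)}$, after amplification by at most $n+r$ multipliers each of absolute value $\le1$, still sum to less than $1$, so that integrality of $A,b,x$ can be invoked at the very end. The remaining estimates are routine.
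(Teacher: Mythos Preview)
Your proposal is correct and follows essentially the same approach as the paper: pick a maximum-volume row subsystem of $[A\mid U]$, round $U$ to multiples of $\frac{1}{4r(n+r)\Delta}$, use $y=V^j$ for the forward inclusion, and for the reverse inclusion use Cramer's rule on the max-volume basis to get $|\lambda_{ik}|\le1$, consistency of $Ax+Uy=b$ to carry the relation over to $b$, and integrality of $A,b,x$ to close the gap. The only cosmetic difference is that the paper argues the reverse inclusion by contraposition (start from a violated row $A_\ell x\ge b_\ell+1$ and push the error down to $I$), whereas you argue directly (propagate the small error from $I$ to all rows and conclude $Ax\le b$, then $x\in\conv(X)\cap\{0,1\}^n=X$); the ingredients and arithmetic are identical.
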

\begin{proof}
Let $X = \{ x_1,\ldots,x_{v} \}$ and
let $Ax \leq b$ with $A \in \setZ^{f × n}$ and $b \in \setZ^{f}$ be a non-redundant description of $\conv(X)$ 
with $\| A\|_{\infty}, \| b\|_{\infty} \leq \Delta$. Furthermore let $S \in \setZ_{\geq 0}^{f × |X|}$ be the corresponding slack matrix. 
By Yannakakis' Theorem~\ref{thm:Yannakakis}, we can 
write $P = \conv(X) = \{ x \in \setR^n \mid \exists y \in \setR^{r}: Ax + Uy = b, \; y \geq \mathbf{0} \}$ where $U,V$
are the non-negative factorization of the slack-matrix, i.e. $S = UV$. 
By Lemma~\ref{lem:NormOfNormalizedMatrices} we may assume that 
$\| U\|_{\infty}, \| V\|_{\infty} \leq \Delta$.
Let $W = \textrm{span}(\{(A_i,U_i) \mid i=1,\ldots,f\})$ be the span of the
constraint matrix of the system $Ax + Uy = b$ and let $k = \dim(W)$ be its dimension.
Choose $I\subseteq\{ 1,\ldots,f\}$ of size $|I| = k$ such that
$\vol(\{ (A_i,U_i) \mid i \in I\})$ is maximized. 
Recall that $U_I$ is the matrix $U$, restricted to the rows in $I$. 
Let $U_I'$ be the matrix $U_I$ where coefficients are rounded down to the nearest 
multiple of $\frac{1}{4r(n+r)\Delta}$. Our choice will be $\bar{A} := A_I,\bar{U}:=U_{I}', \bar{b}:=b_I$, hence it remains to show that
\[
  X \stackrel{!}{=}  \bigg\{ x \in \{ 0,1\}^n \mid \exists y\in [0,\Delta]^r: \| A_Ix + U_I'y - b_I \|_{\infty} \leq \frac{1}{4(n+r)} \bigg\} =: Y
\]
\begin{claim*} $X\subseteq Y$. \end{claim*} 
\begin{proofofclaim}
Consider a vector $x_j \in X$. Using Yannakakis' Theorem~\ref{thm:Yannakakis},
we can simply choose $y := V^j \geq \mathbf{0}$ and have
$Ax_j + U\cdot y = b$. 
Due to normalization,   $\| y \|_{\infty} \leq \|V\|_{\infty} \leq \Delta$.
Note that $\|U - U'\|_{\infty} \leq \frac{1}{4r(n+r)\Delta}$. By the triangle inequality
\begin{eqnarray*}
\|A_Ix_j+U_I'y - b_I\|_{\infty} &\leq& 
\|\underbrace{A_Ix_j + U_Iy - b_I}_{=\mathbf{0}} + (U_I'- U_I)y\|_{\infty} \\
&\leq& r\cdot \underbrace{\|U_I'-U_I\|_{\infty}}_{\leq \frac{1}{4r(n+r)\Delta}} \cdot \underbrace{\|y\|_{\infty}}_{\leq \Delta} \leq \frac{1}{4(n+r)}
\end{eqnarray*}
Thus $x_j \in Y$.
\end{proofofclaim}
\begin{claim*} $X\supseteq Y$. \end{claim*}
\begin{proofofclaim}
We show that for $x \in \{ 0,1\}^n$ with $x \notin X$ one has $x \notin Y$. 
Since $x \notin X$, there must be a row $\ell$ with
$A_{\ell}x > b_{\ell}$. Since $A,b$ and $x$ are integral, one even has $A_{\ell}x \geq b_{\ell}+1$.
Unfortunately $\ell$ is in general not among the selected constraints $I$.
But there are unique coefficients $\lambda \in \setR^{k}$ such that we can express
constraint $A_{\ell}x + U_{\ell}y = b_{\ell}$ as a linear combination of those in $I$, i.e.
\[
  \begin{pmatrix} A_{\ell}, U_{\ell} \end{pmatrix} = \sum_{i\in I} \lambda_{i}\begin{pmatrix} A_{i}, U_{i} \end{pmatrix}.
\]
Note that automatically we have $\sum_{i\in I} \lambda_ib_i = b_{\ell}$, since
otherwise the system $Ax + Uy = b$ could not have any solution $(x,y)$ at all and $X=\emptyset$.
The next step is to bound the coefficients $\lambda_i$. 
Here we recall that by Cramer's rule
\[
  |\lambda_{i}| = \dfrac{\vol\big(\big\{ ( A_{i'}, U_{i'}) \mid i' \in I \backslash \{ i \} \cup \{ \ell \} \big\}\big)}{\vol\big(\big\{ ( A_{i'}, U_{i'})  \mid i' \in I\big\}\big)} \leq 1
\]
since we picked $I$ such that $\vol(\{ ( A_{i'}, U_{i'})  \mid i' \in I\})$ is maximized.
Fix an arbitrary $y \in [0,\Delta]^r$, then
\begin{eqnarray}
 1 \leq  |\underbrace{A_{\ell}x - b_{\ell}}_{\geq 1} + \underbrace{U_{\ell}y}_{\geq 0} | 
&=& \Big| \sum_{i\in I} \lambda_{i} (A_{i}x  - b_i + U_iy)\Big|  \label{eq:MainProof} \\
&\leq& \sum_{i\in I} \underbrace{|\lambda_{i}|}_{\leq 1}\cdot |A_{i}x  - b_i + U_iy| \nonumber \\
&\leq& (n+r)\cdot \|A_Ix - b_I + U_Iy\|_{\infty} \nonumber
\end{eqnarray}
using the triangle inequality and the fact that $|I| \leq n+r$.
Again making use of the triangle inequality yields
\begin{eqnarray}
 \|A_Ix - b_I + U_Iy\|_{\infty}  
&=&  \|A_Ix - b_I + U_I'y + (U_I-U_I')y\|_{\infty} \label{eq:MainProofII} \\
&\leq&  \|A_Ix - b_I + U_I'y \|_{\infty} + r\cdot \underbrace{\| U_I - U_{I}' \|_{\infty}}_{\leq \frac{1}{4r(n+r)\Delta}} \cdot \underbrace{\|y\|_{\infty}}_{\leq \Delta} \nonumber \\
&\leq& \|A_Ix - b_I + U_I'y \|_{\infty} + \frac{1}{4(n+r)} \nonumber
\end{eqnarray}
Combining \eqref{eq:MainProof} and \eqref{eq:MainProofII} gives  $\| A_Ix - b_I + U_I'y\|_{\infty} \geq \frac{1}{n+r} - \frac{1}{4(n+r)} \geq \frac{1}{2(n+r)}$  and consequently $x \notin Y$.
\end{proofofclaim} 

The assertion of the Theorem follows. 
Note that by padding empty rows, we can ensure that $\bar{A},\bar{U},\bar{b}$ have exactly $n+r$ rows.
\end{proof}

\begin{theorem} \label{thm:NonnegativeRankLowerBound}
For any $n\in \setN$, there exists a set $X \subseteq \{ 0,1\}^n$ such that 
$\xc(\conv(X)) \geq \Omega(2^{n/2} / \sqrt{n \log (2n)})$. 
\end{theorem}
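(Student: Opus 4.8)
The plan is to run the counting argument sketched in the introduction, using Theorem~\ref{thm:DescriptionOfX} as the workhorse. Suppose for contradiction that \emph{every} $X \subseteq \{0,1\}^n$ satisfies $\xc(\conv(X)) \le r$ for some common bound $r = r(n)$. By Theorem~\ref{thm:DescriptionOfX}, each such $X$ is completely determined by the triple $(\bar A, \bar U, \bar b)$: the theorem exhibits an explicit set-builder formula recovering $X$ from the triple, so the map $X \mapsto (\bar A, \bar U, \bar b)$ is injective (after fixing the padding convention so that all triples have exactly $n+r$ rows). Hence the number of distinct triples arising this way is at least the number of sets $X$, namely $2^{2^n}$.

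Next I would upper bound the number of possible triples. We have $\bar A \in \setZ^{(n+r)\times n}$ with $\|\bar A\|_\infty \le \Delta$, so there are at most $(2\Delta+1)^{(n+r)n}$ choices; similarly $\bar b \in \setZ^{n+r}$ contributes at most $(2\Delta+1)^{n+r}$ choices. For $\bar U$, each entry lies in $\frac{1}{4r(n+r)\Delta}\setZ_{\ge 0} \cap [0,\Delta]$, so each entry has at most $4r(n+r)\Delta^2 + 1$ possible values, giving at most $(4r(n+r)\Delta^2+1)^{(n+r)r}$ choices. Multiplying, the total number of triples is bounded by something of the form $(\mathrm{poly}(n,r)\cdot\Delta)^{O((n+r)^2)}$. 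Taking logarithms (base $2$) and using $\Delta \le 2^{n\log(2n)}$, this is at most $O\big((n+r)^2 \cdot (n\log(2n) + \log r)\big)$.

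Comparing the two bounds, we need $2^{2^n} \le 2^{O((n+r)^2(n\log(2n)+\log r))}$, i.e. $2^n \le O\big((n+r)^2(n\log(2n) + \log r)\big)$. If $r$ were bounded by a polynomial in $n$ this would fail for large $n$; more carefully, solving for $r$ forces $(n+r)^2 \ge \Omega(2^n / (n\log(2n) + \log r))$. Since $\log r \le \log(2^{2^n}) = 2^n$ is a crude but sufficient bound (and in fact $r \le 2^n$ always, as $\conv(X)$ has at most $2^n$ vertices and hence at most $2^n$ facets, making $\log r \le n$ dominated by $n\log(2n)$), we get $(n+r)^2 \ge \Omega(2^n/(n\log(2n)))$, hence $r \ge \Omega(2^{n/2}/\sqrt{n\log(2n)})$. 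Since $r$ was a uniform upper bound on $\xc(\conv(X))$ over all $X$, there must exist a particular $X$ achieving at least this value, which is the claim.

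The main obstacle — and the reason the theorem is nontrivial — is precisely that one cannot bound the \emph{encoding length} of the entries of $U$ a priori, only their magnitude (Lemma~\ref{lem:NormOfNormalizedMatrices}); the discretization to multiples of $\frac{1}{4r(n+r)\Delta}$ performed inside Theorem~\ref{thm:DescriptionOfX} is what makes the set of triples finite, and the bulk of the real work is already absorbed into that theorem. The remaining delicate point here is the self-referential appearance of $r$ on both sides of the counting inequality; this is handled by the trivial bound $r \le 2^n$, which keeps $\log r$ from interfering with the dominant $n\log(2n)$ term, and then the inequality is solved for $r$ as above.
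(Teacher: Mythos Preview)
Your approach is essentially the paper's own proof: define $R=\max_X \xc(\conv(X))$, use Theorem~\ref{thm:DescriptionOfX} to get an injective map $X\mapsto(\bar A,\bar U,\bar b)$ into a finite set, count, and solve for $R$ using $R\le 2^n$.

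There is one small slip. You write that each entry of $\bar U$ lies in $\frac{1}{4r(n+r)\Delta}\setZ_{\ge 0}\cap[0,\Delta]$, where $r$ is your \emph{uniform} upper bound. But Theorem~\ref{thm:DescriptionOfX} produces $\bar U$ on the grid $\frac{1}{4r_X(n+r_X)\Delta}\setZ_{\ge 0}$ with $r_X=\xc(\conv(X))$, which varies with $X$; after padding to size $(n+r)\times r$ the nonzero entries still sit on the $r_X$-dependent grid, not the $r$-grid. The paper handles this by taking the domain for each entry to be the union over $r_X\in\{1,\ldots,R\}$, i.e.\ at most $\sum_{r_X=1}^R 4r_X(n+r_X)\Delta^2\cdot 2$ values, and then crudely bounds this by $16\Delta^5$ using $R\le 2^n\le\Delta$. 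This is a harmless polynomial factor and does not affect your final asymptotic, but as written your count of possible $\bar U$'s is not quite justified.

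A second minor point: your justification ``$\conv(X)$ has at most $2^n$ vertices and hence at most $2^n$ facets'' is not a valid inference (facet counts are not bounded by vertex counts in general). The correct reason that $\xc(\conv(X))\le 2^n$ is that a simplex on $|X|\le 2^n$ vertices has $|X|$ facets and projects onto $\conv(X)$; the conclusion $\log r\le n$ that you actually use is fine.
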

\begin{proof}
Let $R := R(n)$ be the maximum value of $\xc(\conv(X))$ over all $X \subseteq \{ 0,1\}^n$.
In the following, we use that $R \leq 2^n$ (otherwise, there is nothing to show).
The construction in Theorem~\ref{thm:DescriptionOfX} implicitly defines a function $\Phi$ which
maps a set $X$ to a system $(\bar{A},\bar{U},\bar{b})$\footnote{The initial system $Ax \leq b$ describing $\conv(X)$ might not be unique, as well as 
index set $I$. For $\Phi$ to be well defined one can make an arbitrary canonical choice, like  choosing $Ax\leq b$ and $I$ lexicographical minimal.}. The important observation is that due to Theorem~\ref{thm:DescriptionOfX}, 
for a given system  $(\bar{A},\bar{U},\bar{b})$, one can reconstruct the 
corresponding set $X$. In other words, the function $\Phi$ is injective. 
 In fact, adding zero rows and columns
to those matrices does not change the claim, hence we may assume that
$\bar{A}$ is an $(n + R) × n$ matrix and $\bar{U}$ is an $(n+R) × R$ matrix.
Every entry in $\bar{U}$ has absolute value at most $\Delta$ and is a multiple 
of $\frac{1}{4r(n+r)\Delta}$ for some $r \in \{ 1,\ldots,R\}$. In other words, the domain for each entry contains
at most $\sum_{r=1}^R 2\cdot4r(n+r)\Delta\cdot \Delta  \leq 8R^2(n+R^2)\Delta \leq 16\Delta^5$ many possible values (here we use the
generous estimates $R \leq 2^n \leq \Delta$ and $n \leq \Delta$).
By injectivity of $\Phi$, the number of sets $X$ (which is $2^{2^n}-1$) cannot
be larger than the number of systems $(\bar{A}, \bar{U}, \bar{b})$. Thus
\[
  2^{2^n}-1 \leq (16\Delta^5)^{(n+R+1) \cdot (n+R)} 
\leq 2^{C(n^4 + n \log (2n)\cdot R^2)}
\]
for some constant $C>0$. Hence $R \geq C'\cdot 2^{n/2} / \sqrt{n \log (2n)}$ for some $C'>0$.
\end{proof}




\section{A lower bound for matroid polytopes}

The main drawback of our result is that it does not rule out compact formulations for any explicitly known
polytope. However, we can extend the result to matroid polytopes. Recall that a pair $([n],\mathcal{I})$
is called a \emph{matroid} with \emph{ground set} $[n] = \{ 1,\ldots,n\}$ and \emph{independent sets} $\mathcal{I} \subseteq 2^{[n]}$, 
if (I) $I \in \mathcal{I}, J \subseteq I \Rightarrow J \in \mathcal{I}$ and (II) for all $I,J \in \mathcal{I}$
with $|I| < |J|$ there is a $z \in J \backslash I$ with $I + z \in \mathcal{I}$. 
Note that all non-trivial facet-defining inequalities for $\conv(\chi(\mathcal{I}))$ are 
of the form $\sum_{i \in S} x_i \leq r_{\mathcal{I}}(S)$ with $S \subseteq [n]$, where $r_{\mathcal{I}}$ denotes the \emph{rank function}
of the matroid ($\chi(\mathcal{I})$ denotes the set of characteristic vectors of $\mathcal{I}$). Secondly, any linear objective function can be optimized over $\conv(\chi(\mathcal{I}))$
using the greedy algorithm, which involves
calling a membership oracle a polynomial number of times. See e.g. the textbook of Schrijver~\cite{CombinatorialOptimizationABC-Schrijver}
for more details. 

Nevertheless, it is well known that the number of matroids with ground set $\{1,\ldots,n\}$ is at least
 $2^{{n \choose \lfloor n/2\rfloor}/(2n)} \geq 2^{2^n/(10n^{3/2})}$ for $n$ large enough~\cite{DoublyExponentiallyManyMatroids-Dukes2003}. 
In other words, there are doubly-exponentially many matroids. 
Using the same proof as for Theorem~\ref{thm:NonnegativeRankLowerBound} we obtain:
\begin{corollary}
There exists a family $M_n = (\{1,\ldots,n\},\mathcal{I}_n)$ of matroids such that
$\xc(\conv(\chi(\mathcal{I}_n))) = \Omega(2^{n/2} / (n^{5/4}\log(2n)))$.
\end{corollary}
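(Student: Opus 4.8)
The plan is to mimic the proof of Theorem~\ref{thm:NonnegativeRankLowerBound} verbatim, replacing the counting of all $0/1$ polytopes in $\setR^n$ by the counting of matroid polytopes on the ground set $[n]$. The key point that makes this work is that Theorem~\ref{thm:DescriptionOfX} applies to \emph{any} non-empty $X \subseteq \{0,1\}^n$, and in particular to $X = \chi(\mathcal{I})$ for a matroid $([n],\mathcal{I})$; so the injective encoding $\Phi$ restricts to an injective map from the set of matroids on $[n]$ into the set of discretized systems $(\bar A, \bar U, \bar b)$.

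First I would let $R := R(n)$ denote the maximum of $\xc(\conv(\chi(\mathcal{I})))$ over all matroids $M = ([n],\mathcal{I})$, and as before assume $R \le 2^n$. For each such matroid, $X = \chi(\mathcal{I})$ is a non-empty subset of $\{0,1\}^n$ (it always contains $\mathbf{0}$), so Theorem~\ref{thm:DescriptionOfX} produces a system $(\bar A, \bar U, \bar b)$ with $\bar A \in \setZ^{(n+r)\times n}$, $\bar b \in \setZ^{n+r}$, $\bar U \in (\frac{1}{4r(n+r)\Delta}\setZ_{\ge 0})^{(n+r)\times r}$, all entries bounded in absolute value by $\Delta$, where $r = \xc(\conv(\chi(\mathcal{I}))) \le R$. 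Padding with zero rows and columns, I regard $\bar A$ as an $(n+R)\times n$ matrix and $\bar U$ as an $(n+R)\times R$ matrix, and fix a canonical choice (lexicographically minimal $Ax \le b$ and index set $I$) to make $\Phi$ well defined; Theorem~\ref{thm:DescriptionOfX} guarantees that $X$ — hence $\mathcal{I}$ — is recoverable from $(\bar A, \bar U, \bar b)$, so $\Phi$ is injective on matroids.

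Next I would count the number of admissible systems exactly as in Theorem~\ref{thm:NonnegativeRankLowerBound}: each of the $(n+R)\cdot n$ entries of $\bar A$ and each of the $n+R$ entries of $\bar b$ ranges over $\le 2\Delta+1$ integers, and each of the $(n+R)\cdot R$ entries of $\bar U$ lies in one of $\le 16\Delta^5$ possible values, giving a total of at most $(16\Delta^5)^{(n+R+1)(n+R)} \le 2^{C(n^4 + n\log(2n)\cdot R^2)}$ systems. On the other hand, by~\cite{DoublyExponentiallyManyMatroids-Dukes2003} the number of matroids on $[n]$ is at least $2^{2^n/(10 n^{3/2})}$ for $n$ large. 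Injectivity of $\Phi$ then forces
\[
  2^{2^n/(10 n^{3/2})} \le 2^{C(n^4 + n\log(2n)\cdot R^2)},
\]
and solving for $R$ yields $R \ge \Omega\big(2^{n/2}/(n^{5/4}\sqrt{\log(2n)})\big)$, which is at least $\Omega(2^{n/2}/(n^{5/4}\log(2n)))$. Taking $M_n$ to be a matroid attaining $R(n)$ completes the argument.

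I do not expect a genuine obstacle here: all the hard work is already done in Theorems~\ref{thm:DescriptionOfX} and~\ref{thm:NonnegativeRankLowerBound}. The only things to check are (i) that the class we restrict $\Phi$ to is still doubly-exponential — this is exactly the cited bound of Dukes — and (ii) bookkeeping the extra $n^{3/2}$ loss in the matroid count, which weakens the exponent's polynomial correction from $\sqrt{n\log(2n)}$ to roughly $n^{5/4}\log(2n)$ but does not touch the leading $2^{n/2}$ term. If anything needs care, it is making sure $\chi(\mathcal{I})$ is non-empty (true, since $\emptyset \in \mathcal{I}$) so that Theorem~\ref{thm:DescriptionOfX} is applicable, and recalling that distinct matroids give distinct vertex sets $\chi(\mathcal{I})$, so injectivity of $\Phi$ on $0/1$ sets already implies injectivity on matroids.
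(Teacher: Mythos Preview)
Your proposal is correct and follows exactly the approach the paper indicates: it simply reruns the counting argument of Theorem~\ref{thm:NonnegativeRankLowerBound} with the doubly-exponential lower bound $2^{2^n/(10n^{3/2})}$ on the number of matroids in place of $2^{2^n}-1$, yielding the weaker polynomial correction $n^{5/4}\log(2n)$. The paper itself does not spell out the proof beyond ``using the same proof as for Theorem~\ref{thm:NonnegativeRankLowerBound}'', so your write-up is, if anything, more detailed than what the paper provides.
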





\section{Approximating $0/1$ polytopes}

In this section, we want to extend the result of Theorem~\ref{thm:DescriptionOfX} such that
any $0/1$ polytope $P$ can be arbitrarily well approximated as a projection of a 
polytope $Q$ with $O(n + \xc(P))$ facets but still small encoding length. 
See Figure \ref{thm:Approximation-P-by-Q} for an illustration. 
In the following, for any $\varepsilon > 0$, let $P + \varepsilon = \{ x+z \in \setR^n \mid x \in P, \|z\|_2 \leq \varepsilon\}$. 
\begin{theorem} \label{thm:Approximation-P-by-Q}
For any non-empty $0/1$ polytope $P = \conv(X)$ ($X \subseteq \{ 0,1\}^n$) and any $\varepsilon > 0$, 
there exists a polytope $Q = \{ (x,y) \in \setR^{n} × \setR^{\xc(P)} \mid Bx + Cy \leq d \}$ such that $B \in \setQ^{(4\xc(P)+2n)× n}, C \in \setQ^{(4\xc(P)+2n) × \xc(P)}$
and $b \in \setQ^{4\xc(P)+2n}$ 
have encoding length $poly(n,\xc(P),\log (\frac{1}{\varepsilon}))$ and $P \subseteq \proj_x(Q) \subseteq P + \varepsilon$.

Furthermore for any objective function $c \in \setR^n$, 
$\max\{ c^Tx \mid x \in \proj_x(Q)\} - \max\{ c^Tx \mid x \in P\} \leq \varepsilon \cdot \| c\|_{2}$.
\end{theorem}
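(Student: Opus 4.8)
The plan is to re-run the construction of Theorem~\ref{thm:DescriptionOfX}, but with a much finer rounding grid $q$ and with the exact affine system relaxed to an inequality system carrying a tiny slack $\delta$. Concretely, via Yannakakis (Theorem~\ref{thm:Yannakakis}) and the normalization of Lemma~\ref{lem:NormOfNormalizedMatrices}, write $P=\conv(X)=\{x\mid\exists y\geq\mathbf{0}:Ax+Uy=b\}$ with all entries of $A,b,U,V$ bounded by $\Delta$ in absolute value, $r:=\xc(P)$; choose $I$ of size $\dim(\textrm{span}\{(A_i,U_i)\})$ maximizing $\vol(\{(A_i,U_i)\mid i\in I\})$; let $\bar A,\bar b$ be the corresponding rows of $A,b$ (padded with zeros to $n+r$ rows) and let $\bar U$ be the entrywise rounding down of $U_I$ to multiples of a $q>0$ (a power of $\tfrac12$, to be fixed, far smaller than the grid $\tfrac1{4r(n+r)\Delta}$ used in Theorem~\ref{thm:DescriptionOfX}). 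With $\delta:=rq\Delta$, put
\[
  Q:=\big\{(x,y)\in\setR^n\times\setR^r\ \big|\ \|\bar Ax+\bar Uy-\bar b\|_{\infty}\leq\delta,\ \mathbf{0}\leq y\leq\Delta\mathbf{1}\big\}.
\]
Spelling out the two-sided inequalities and the box as one-sided inequalities yields $Q=\{(x,y)\mid Bx+Cy\leq d\}$ with $2(n+r)+2r=4\xc(P)+2n$ rows, the entries of $B,C,d$ being integers of absolute value $\leq\Delta$ (from $\bar A,\bar b$), multiples of $q$ of absolute value $\leq\Delta$ (from $\bar U$), or one of $0,\pm1,\Delta,\bar b_i\pm\delta$; each therefore has encoding length $O(\log\Delta+\log(1/q))=\log(1/q)+poly(n)$. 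The inclusion $P\subseteq\proj_x(Q)$ is the argument for ``$X\subseteq Y$'' in Theorem~\ref{thm:DescriptionOfX} verbatim: for $x=\sum_j\mu_jx_j\in P$ the vector $y:=\sum_j\mu_jV^j\in[0,\Delta]^r$ satisfies $\bar Ax+U_Iy=\bar b$, hence $\|\bar Ax+\bar Uy-\bar b\|_{\infty}=\|(\bar U-U_I)y\|_{\infty}\leq rq\Delta=\delta$ and $(x,y)\in Q$.

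For $\proj_x(Q)\subseteq P+\varepsilon$, fix $(x^*,y^*)\in Q$ and assume $x^*\notin P$. Following the proof of ``$X\supseteq Y$'' in Theorem~\ref{thm:DescriptionOfX}: for every row $\ell$ write $(A_\ell,U_\ell)=\sum_{i\in I}\lambda_i^{(\ell)}(A_i,U_i)$ with $|\lambda_i^{(\ell)}|\leq1$ (by the maximality of $\vol$, via Cramer's rule) and, since $X\neq\emptyset$, automatically $b_\ell=\sum_{i\in I}\lambda_i^{(\ell)}b_i$. From $\|\bar Ax^*+U_Iy^*-\bar b\|_{\infty}\leq\|\bar Ax^*+\bar Uy^*-\bar b\|_{\infty}+\|(U_I-\bar U)y^*\|_{\infty}\leq\delta+rq\Delta=2\delta$ and $|I|\leq n+r$ we get $\|Ax^*+Uy^*-b\|_{\infty}\leq2(n+r)\delta=:\eta$; since $U\geq\mathbf{0}$ and $y^*\geq\mathbf{0}$ this forces $Ax^*\leq b+\eta\mathbf{1}$, i.e. $x^*$ violates each facet inequality of $P$ by at most $\eta$. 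Let $\pi\in P$ be nearest to $x^*$ and $u:=(x^*-\pi)/\|x^*-\pi\|_2$; then $u^Tz\leq u^T\pi$ is valid for $P=\{z\mid Az\leq b\}$, so by LP duality there is $\mu\geq\mathbf{0}$ with $A^T\mu=u$ and $b^T\mu\leq u^T\pi$, which we may take to be a vertex of $\{\mu\geq\mathbf{0}\mid A^T\mu=u\}$ and hence supported on at most $n$ coordinates; Cramer's rule applied to a nonsingular integer submatrix of $A^T$, together with Hadamard's bound and $\|u\|_2=1$, gives $\|\mu\|_1\leq\Gamma:=(n\Delta)^n$ (note $\log\Gamma=poly(n)$ since $\Delta\leq2^{n\log(2n)}$). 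Therefore
\[
  \dist(x^*,P)=u^Tx^*-u^T\pi\leq\mu^T(Ax^*-b)\leq\eta\,\|\mu\|_1\leq2(n+r)\,\delta\,\Gamma.
\]
Choosing $q$ to be the largest power of $\tfrac12$ with $q\leq\varepsilon/\big(2r(n+r)\Delta\Gamma\big)$ makes the right-hand side $\leq\varepsilon$, so $\proj_x(Q)\subseteq P+\varepsilon$; as $Q$ is then contained in $(P+\varepsilon)\times[0,\Delta]^r$, it is a polytope. Moreover $\log(1/q)=\log(1/\varepsilon)+poly(n)$ (using $r\leq2^n$ and $\Delta\leq2^{n\log(2n)}$), so by the first paragraph $B,C,d$ are rational of encoding length $poly(n,\xc(P),\log(1/\varepsilon))$; if needed, replace $\varepsilon$ at the outset by a smaller power of $\tfrac12$, which only shrinks $P+\varepsilon$. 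Finally, for $x\in\proj_x(Q)\subseteq P+\varepsilon$ write $x=z+w$ with $z\in P$, $\|w\|_2\leq\varepsilon$; then $c^Tx\leq\max\{c^Tz\mid z\in P\}+\|c\|_2\|w\|_2$, and taking the maximum over $x\in\proj_x(Q)$ (which is $\geq\max\{c^Tx\mid x\in P\}$ because $P\subseteq\proj_x(Q)$) gives the ``furthermore''.

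The main obstacle is the inclusion $\proj_x(Q)\subseteq P+\varepsilon$: one must pass from ``$x^*$ approximately satisfies the (possibly exponentially many) facet inequalities of $P$'' to a bound on $\dist(x^*,P)$ whose constant is independent both of the number $f$ of facets and of $\varepsilon$. Two ingredients already present in Theorem~\ref{thm:DescriptionOfX} make this work: the sign conditions $U,y^*\geq\mathbf{0}$, which turn the approximate equality $Ax^*+Uy^*\approx b$ into the one-sided $Ax^*\leq b+\eta\mathbf{1}$; and the estimate $\sum_{i\in I}|\lambda_i^{(\ell)}|\leq|I|\leq n+r$ for the blow-up incurred when expressing an arbitrary row $\ell$ over the maximum-volume subsystem $I$. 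The rest is bookkeeping: only $\log(1/q)$ enters the encoding length, so any bound $q\geq\varepsilon\cdot2^{-poly(n)}$ suffices.
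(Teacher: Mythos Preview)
Your proof is correct and follows the paper's overall strategy closely: re-run the construction of Theorem~\ref{thm:DescriptionOfX} with a finer rounding grid, verify $P\subseteq\proj_x(Q)$ exactly as before, and then use the maximum-volume subsystem $I$ together with the Cramer bound $|\lambda_i^{(\ell)}|\leq 1$ to deduce that every $x^*\in\proj_x(Q)$ satisfies $Ax^*\leq b+\eta\mathbf{1}$ for a tiny $\eta$. The two proofs diverge only in the final step, namely how one passes from $\{x\mid Ax\leq b+\eta\mathbf{1}\}$ to $P+\varepsilon$.

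The paper argues combinatorially at the level of vertices: for each vertex $x^*$ of the relaxed system it takes the defining basis $J$, uses the first term in its two-part choice $\delta=\min\{\tfrac{1}{2(n\Delta)^{2n+2}},\tfrac{\varepsilon}{n(n\Delta)^n}\}$ to certify that $J$ is still a feasible basis of $P$, and then bounds $\|A_J^{-1}(\delta\mathbf{1})\|_2$. You instead use LP duality: the unit separating direction $u=(x^*-\pi)/\|x^*-\pi\|_2$ is a valid objective, so strong duality produces a basic $\mu\geq\mathbf{0}$ with $A^T\mu=u$, $b^T\mu=u^T\pi$, support size at most $n$, and $\|\mu\|_1\leq (n\Delta)^n$ via Cramer--Hadamard; this yields $\dist(x^*,P)=\mu^T(Ax^*-b)\leq\eta\|\mu\|_1$ directly. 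Your route is slightly cleaner in that a single threshold on $q$ suffices (no need for the ``$J$ remains feasible'' clause and the corresponding $\tfrac{1}{2(n\Delta)^{2n+2}}$ term), and it handles every $x^*\in\proj_x(Q)$ uniformly rather than reducing to vertices. The paper's route, on the other hand, is more explicit: it exhibits a concrete nearby point $A_J^{-1}b_J\in P$. Both give the same encoding-length bound, and the ``furthermore'' clause follows identically from $\proj_x(Q)\subseteq P+\varepsilon$ and Cauchy--Schwarz.
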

\begin{proof}
Again let $P = \{ x \in \setR^n \mid Ax \leq b\}$ be a non-redundant inequality description of $P$ such that $A$ and $b$
have entries from $\{-\Delta,\ldots,\Delta\}$. Abbreviate $r := \xc(P)$. We again apply Theorem~\ref{thm:DescriptionOfX} to obtain
a system $A_I,U_I', b_I$. But this time, we round the entries in the matrix $U_I$ down 
to the nearest multiple of $\frac{\delta}{4r(n+r)\Delta}$ (instead of $\frac{1}{4r(n+r)\Delta}$), for $\delta := \min\{ \frac{1}{2 (n\Delta)^{2n+2}}, \frac{\varepsilon}{n\cdot (n\Delta)^{n}}\}$.
We choose
\[
 Q := \left\{ (x,y) \mid \| A_Ix + U_I'y- b_I\|_{\infty} \leq \frac{\delta}{4(n+r)}, y \in [0,\Delta]^r  \right\}
\]
Note that $Q$ is in fact a polytope which can be written in the form $Q = \{ (x,y) \mid Bx + Cy \leq d\}$ such that $B,C,d$ are of the claimed format. 
Furthermore the encoding length of $B,C,d$ is polynomial in $n$, $\xc(P)$
and $\log(1/\varepsilon)$\footnote{This follows from the fact that all coefficients in $B,C,d$ are 
products of $n, \xc(P), \delta, \varepsilon, \Delta$ (or their reciprocals) and $\log(\Delta) \leq O(n \cdot \log n), \log(1/\delta) \leq \log(1/\varepsilon) + O(n^2 \log n)$.}. In the remaining proof we show that  $P \subseteq \proj_x(Q) \subseteq \{ x \in \setR^n \mid Ax \leq b + \delta \mathbf{1} \} \subseteq P + \varepsilon$.

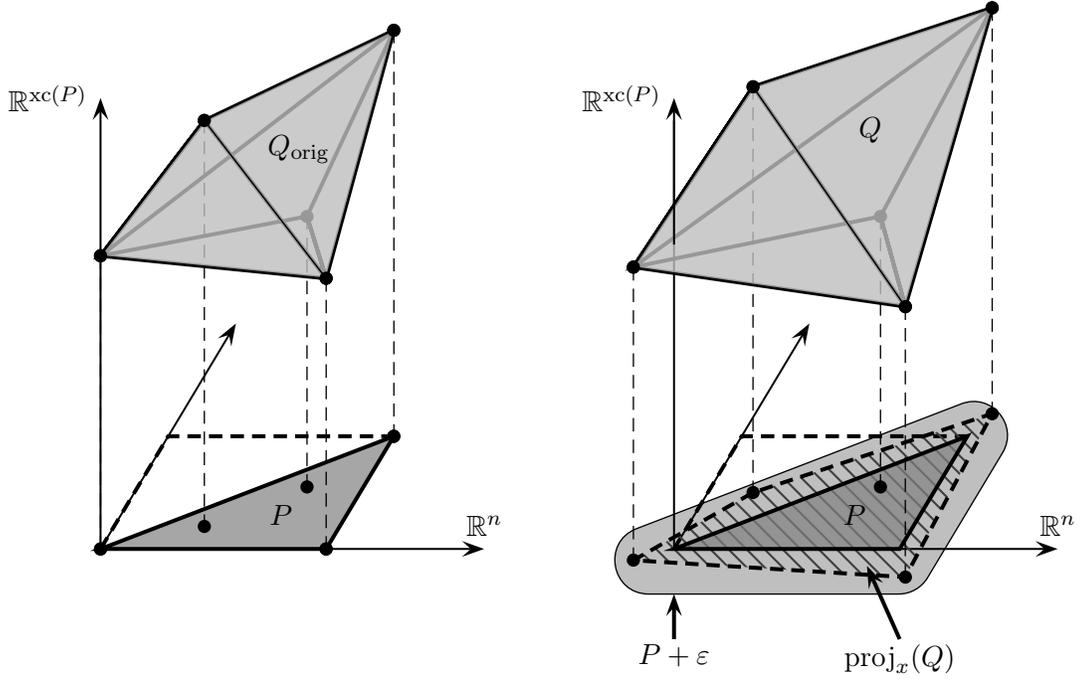
\begin{figure}
\begin{center}
\psset{unit=3.0cm}
\begin{pspicture}(0.2,-0.6)(1.5,1.8)
  \placeIIID{0}{0}{0}{\pnode(0,0){x00}}
  \placeIIID{1}{0}{0}{\pnode(0,0){x10}}
  \placeIIID{0}{1}{0}{\pnode(0,0){x01}}
  \placeIIID{1}{1}{0}{\pnode(0,0){x11}}
  \placeIIID{1.7}{0}{0}{\pnode(0,0){axesX}}
  \placeIIID{0}{2}{0}{\pnode(0,0){axesY}}
  \placeIIID{0}{0}{2}{\pnode(0,0){axesZ}}
  \ncline[linewidth=0.75pt]{->}{x00}{axesX}
  \ncline[linewidth=0.75pt]{->}{x00}{axesY}
  \ncline[linewidth=0.75pt]{->}{x00}{axesZ}
  \pspolygon[fillstyle=solid,fillcolor=gray,opacity=0.7](x00)(x10)(x11)
  \psdots(x00)(x10)(x11) \rput[c](0.8,0.15){$P$}
  \psline[linestyle=dashed](x00)(x01)(x11)

  \placeProj{0}{0}{1.3}{Q1}{Q1P}
  \placeProj{1}{0}{1.2}{Q2}{Q2P}
  \placeProj{1}{1}{1.8}{Q3}{Q3P}
  \placeProj{0.75}{0.55}{1.2}{Q4}{Q4P}
  \placeProj{0.4}{0.2}{1.8}{Q5}{Q5P}

  \psdots(Q1)(Q2)(Q3)(Q4)(Q5)(Q1P)(Q2P)(Q3P)(Q4P)(Q5P)
  \psline(Q1)(Q2)(Q3)(Q1)(Q4)(Q2)(Q4)(Q3)(Q5)(Q2)(Q5)(Q1)(Q5)
  
  \ncline[linestyle=dashed,linewidth=0.5pt]{Q1}{Q1P}
  \ncline[linestyle=dashed,linewidth=0.5pt]{Q2}{Q2P}
  \ncline[linestyle=dashed,linewidth=0.5pt]{Q3}{Q3P}
  \ncline[linestyle=dashed,linewidth=0.5pt]{Q4}{Q4P}
  \ncline[linestyle=dashed,linewidth=0.5pt]{Q5}{Q5P}
  \pspolygon[fillcolor=lightgray,fillstyle=solid,opacity=0.8,linewidth=0.5pt](Q5)(Q1)(Q2)
  \pspolygon[fillcolor=lightgray,fillstyle=solid,opacity=0.8,linewidth=0.5pt](Q5)(Q2)(Q3)

  \psdots(Q1)(Q2)(Q3)(Q5)
  \nput[labelsep=5pt]{90}{axesX}{$\setR^n$}
  \nput[labelsep=5pt]{180}{axesZ}{$\setR^{\xc(P)}$}
  \nput[labelsep=0.3]{-20}{Q5}{$Q_{\textrm{orig}}$}
\end{pspicture}
\begin{pspicture}(-1,-0.6)(1.0,1.8)
  \placeIIID{0}{0}{0}{\pnode(0,0){x00}}
  \placeIIID{1}{0}{0}{\pnode(0,0){x10}}
  \placeIIID{0}{1}{0}{\pnode(0,0){x01}}
  \placeIIID{1}{1}{0}{\pnode(0,0){x11}}
  \placeIIID{-0.8}{-0.4}{0}{\pnode(0,0){P1}}
  \placeIIID{1.2}{-0.4}{0}{\pnode(0,0){P2}}
  \placeIIID{1.2}{1.6}{0}{\pnode(0,0){P3}}
  \pspolygon[linearc=0.15,linestyle=solid,fillcolor=lightgray,fillstyle=solid,linewidth=0.5pt](P1)(P2)(P3)
  \placeIIID{1.7}{0}{0}{\pnode(0,0){axesX}}
  \placeIIID{0}{2}{0}{\pnode(0,0){axesY}}
  \placeIIID{0}{0}{2}{\pnode(0,0){axesZ}}
  \ncline[linewidth=0.75pt]{->}{x00}{axesX}
  \ncline[linewidth=0.75pt]{->}{x00}{axesY}
  \ncline[linewidth=0.75pt]{->}{x00}{axesZ}
  \psline[linestyle=dashed](x00)(x01)(x11)

  \placeProj{-0.15}{-0.1}{1.3}{Q1}{Q1P}
  \placeProj{1.1}{-0.25}{1.2}{Q2}{Q2P}
  \placeProj{1.05}{1.2}{1.8}{Q3}{Q3P}
  \placeProj{0.75}{0.55}{1.2}{Q4}{Q4P}
  \placeProj{0.2}{0.5}{1.8}{Q5}{Q5P}
  \pspolygon[linestyle=dashed,fillstyle=vlines,hatchcolor=darkgray](Q1P)(Q2P)(Q3P)(Q5P)
  \pspolygon[fillstyle=solid,fillcolor=gray,opacity=0.7](x00)(x10)(x11)
  \psdots(Q1)(Q2)(Q3)(Q4)(Q5)(Q1P)(Q2P)(Q3P)(Q4P)(Q5P)
  \psline(Q1)(Q2)(Q3)(Q1)(Q4)(Q2)(Q4)(Q3)(Q5)(Q2)(Q5)(Q1)(Q5)
  
  \ncline[linestyle=dashed,linewidth=0.5pt]{Q1}{Q1P}
  \ncline[linestyle=dashed,linewidth=0.5pt]{Q2}{Q2P}
  \ncline[linestyle=dashed,linewidth=0.5pt]{Q3}{Q3P}
  \ncline[linestyle=dashed,linewidth=0.5pt]{Q4}{Q4P}
  \ncline[linestyle=dashed,linewidth=0.5pt]{Q5}{Q5P}
  \pspolygon[fillcolor=lightgray,fillstyle=solid,opacity=0.8,linewidth=0.5pt](Q5)(Q1)(Q2)
  \pspolygon[fillcolor=lightgray,fillstyle=solid,opacity=0.8,linewidth=0.5pt](Q5)(Q2)(Q3)

  \rput[c](0.8,0.15){$P$}
  \psdots(Q1)(Q2)(Q3)(Q5)
  \nput[labelsep=5pt]{90}{axesX}{$\setR^n$}
  \nput[labelsep=5pt]{180}{axesZ}{$\setR^{\xc(P)}$}
  \nput[labelsep=0.5]{-20}{Q5}{$Q$}
  
  \pnode(0,-0.2){L3} \pnode(0,-0.4){L4} \ncline[arrowsize=6pt]{->}{L4}{L3} \nput{-90}{L4}{$P + \varepsilon$}
  \placeIIID{0.9}{-0.15}{0}{\pnode(0,0){L1}} \pnode(1,-0.4){L2} \ncline[arrowsize=6pt]{->}{L2}{L1} \nput{-90}{L2}{$\proj_x(Q)$}
  \psline[linewidth=0.75pt](0,1.35)(0,1.9)
\end{pspicture}
\caption{Visualization of Theorem~\ref{thm:Approximation-P-by-Q}\label{fig:Projection}.}
\end{center}
\end{figure}

\begin{claim*} $P \subseteq \proj_x(Q)$.\end{claim*} 
\begin{proofofclaim}
As in Theorem~\ref{thm:DescriptionOfX}, for any vertex $x_j \in P$, one has $(x_j,V^j) \in Q$
(since $\|A_Ix_j + U_{I}'V^j-b_I\|_{\infty} \leq r\cdot \|U_{I}'-U_I \|_{\infty} \cdot \|V^j\|_{\infty} \leq \frac{\delta}{4(n+r)}$).
Consequently $P \subseteq \proj_x(Q)$. 
\end{proofofclaim}

\begin{claim*} $\proj_x(Q) \subseteq \{ x \in \setR^n \mid Ax \leq b+\delta \mathbf{1} \}$.\end{claim*} 
\begin{proofofclaim}
Suppose for the sake of contradiction, that there is an $x^* \in \proj_x(Q)$ such that for some $\ell$ one has $A_{\ell}x^* > b_{\ell} + \delta$.
Revisiting again Inequalities~\eqref{eq:MainProof} and (\ref{eq:MainProofII}), we see that for any $y \in [0,\Delta]^r$ now
\begin{eqnarray*}
 \delta &\stackrel{\eqref{eq:MainProof}}{\leq}& (n+r)\cdot \| A_Ix^* - b_I + U_I\|_{\infty} \\
&\stackrel{\eqref{eq:MainProofII}}{\leq}& (n+r)\cdot\Big( \|A_{I}x^*-b_I + U_{I}'y\|_{\infty} + r\cdot\underbrace{\| U_I-U_{I}'\|_{\infty}}_{\leq \delta / (4r(n+r)\Delta)} \cdot \underbrace{\|y\|_{\infty}}_{\leq \Delta} \Big) \\
&\leq& (n+r)\cdot \|A_{I}x^*-b_I + U_{I}'y\|_{\infty} + \frac{\delta}{4}
\end{eqnarray*}
which implies that $\| A_Ix^* - b_I + U_I'y\|_{\infty} \geq \frac{\delta}{n+r}-\frac{\delta}{4(n+r)} > \frac{\delta}{4r(n+r)}$ and consequently $x^* \notin \proj_x(Q)$. This is a contradiction. 
\end{proofofclaim}

\begin{claim*} $\{ x \in \setR^n \mid Ax \leq b + \delta \mathbf{1} \} \subseteq P + \varepsilon$. \end{claim*}
\begin{proofofclaim}
It suffices to prove that every vertex $x^*$ of $\{ x \mid Ax \leq b + \delta\mathbf{1}\}$ has a distance of
at most $\varepsilon$ to $P$.
There is a subsystem $A_Jx \leq b_J + \delta \mathbf{1}$ of $n$ constraints such that $x^*$ is the
unique solution of $A_Jx = b_J + \delta \mathbf{1}$ or in other words $x^* = A_J^{-1}(b + \delta\mathbf{1})$. 
Since $A$ has integral entries with absolute value at most $\Delta$, we know that we can write
$A_J^{-1} = (\frac{\alpha_{ij}}{\beta})_{i,j}$ with $\alpha_{ij},\beta \in \{ -(n\Delta)^{n},\ldots, (n\Delta)^{n}\}$\footnote{By Cramer's rule, every entry $(i,j)$ of the inverse of an 
$n × n$ matrix $M$ can be written as $± \frac{\det(M')}{\det(M)}$ for some submatrix $M'$ of $M$. By the Hadamard
bound, $|\det(M)| \leq \prod_{i=1}^n \| M^i \|_2 \leq (n\|M\|_{\infty})^{n}$.}. 

Let us assume for the sake of contradiction that $J$ was not a feasible basis for $P$,  i.e. 
  $A(A_J^{-1}b_J) \nleq b$. Well, then there is an index $i$ with $A_i(A_J^{-1}b_J) > b_i$. 
In fact, even $A_i(A_J^{-1}b_J) \geq b_i + \frac{1}{\beta}$. 
But since we picked $\delta$ small enough,  $|A_ix^* - A_{i}(A_J^{-1}b_J)| = |A_iA_J^{-1}\delta \mathbf{1}| \leq n^2\cdot \Delta \cdot (n\Delta)^{n} \delta < \frac{1}{(n\Delta)^{n}} \leq \frac{1}{\beta}$, 
which is a contradiction.

\begin{figure}
\begin{center}
\psset{unit=2cm}
\begin{pspicture}(0.2,-1)(1.5,1.5)
  \pspolygon[fillstyle=solid,fillcolor=lightgray,linewidth=0.75pt](-0.3,-0.3)(1.7,-0.3)(-0.3,1.7)
  \pspolygon[fillstyle=solid,fillcolor=gray,linewidth=0.75pt](0,0)(1,0)(0,1)  \rput[c](0.3,0.3){$P$}
  \psline[linestyle=dashed](x00)(x01)(x11)
  \pnode(0.5,0){L1A} \pnode(0.5,-0.3){L1B} \ncline{<->}{L1A}{L1B} \naput[labelsep=3pt]{$\delta$}
  \pnode(0,0.5){L2A} \pnode(-0.3,0.5){L2B} \ncline{<->}{L2A}{L2B} \nbput[labelsep=3pt]{$\delta$}
  \pnode(0.5,0.5){L3A} \pnode(0.7,0.7){L3B} \ncline{<->}{L3A}{L3B} \nbput[labelsep=3pt]{$\delta$}
  \psline[linewidth=1.5pt](-0.7,-0.3)(2.1,-0.3)
  \psline[linewidth=1.5pt](-0.7,2.1)(2.1,-0.7)
  \cnode*(1.7,-0.3){3pt}{x} \pnode(1.3,-0.7){xL} \nput{-135}{xL}{$x^* = A_J^{-1}(b_J + \delta\cdot\mathbf{1})$} \ncline{->}{xL}{x}
  \cnode*(1,0){3pt}{xII} \pnode(1.5,0.5){xIIL} \nput{45}{xIIL}{$A_J^{-1}b_J$} \ncline{->}{xIIL}{xII}
  \pnode(0.5,1.5){L4A} \pnode(-0.1,1.2){L4B} \ncline[arrowsize=5pt]{->}{L4A}{L4B} \nput{45}{L4A}{$\{ x \mid Ax \leq b + \delta \cdot \mathbf{1}\}$}
  \pnode(2.5,-0.5){L5A} \pnode(2.1,-0.3){L5B} \pnode(2.1,-0.7){L5C} \ncline[nodesepB=3pt,nodesepA=2pt]{->}{L5A}{L5B} \ncline[nodesepB=3pt,nodesepA=2pt]{->}{L5A}{L5C} \nput{0}{L5A}{$\in J$}
  \ncline[linestyle=dashed]{<->}{x}{xII}
\end{pspicture}
\caption{We bound the distance of $x^*$ to $P$ by the distance to $A_J^{-1}b_J$ (see dashed line).\label{fig:DistanceBound}}
\end{center}
\end{figure}
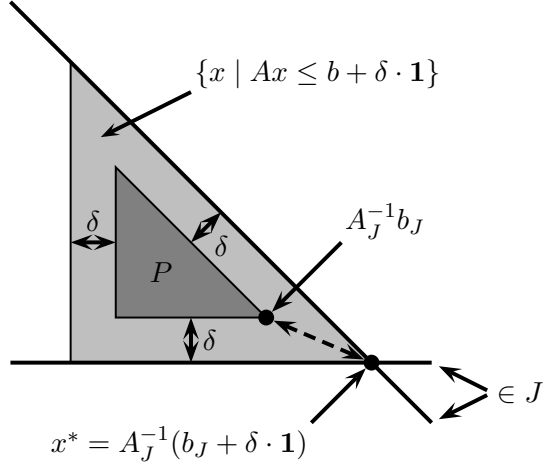

Hence we may assume that $J$ is indeed a feasible basis for $P$ and we can bound the distance of $x^*$ to $P$ by
the distance that the basic solution corresponding to basis $J$ ``moved'' by shifting the hyperplanes
by $\delta$ (see Figure~\ref{fig:DistanceBound}):
\[
  \|x^* - A_J^{-1}b_J\|_2
= \| A_J^{-1}(b_j+\delta\mathbf{1}) - A_J^{-1}b_J\|_2 = \|A_j^{-1}\delta\mathbf{1}\|_{2}
\leq n\cdot \delta \cdot (\Delta n)^{n} \leq \varepsilon.
\]
Here we again used our choice of $\delta$.
\end{proofofclaim} 

Combining the proven claims yields  $P \subseteq \proj_x(Q) \subseteq P + \varepsilon$. 
\end{proof}

\section{Complexity theory considerations}






The set of problems that admit compact formulations induce a non-uniform complexity class in a natural way. 
In the following, we want to briefly discuss, how this class relates to other, well
studied classes. For an up-to-date introduction into the topic of complexity theory, 
we recommend the textbook of \cite{ComputationalComplexity-AroraBarak2009}.
Recall that $\{0,1\}^{*} = \bigcup_{n \geq 0} \{ 0,1\}^n$ is the set of all binary strings. 
By a slight abuse of notation we consider a $0/1$ string of length $n$ 
also as a binary vector of dimension $n$.

\begin{definition}
Let $\mathbf{CF}$ be the set of languages $L \subseteq \{ 0,1\}^*$ for which there exists a polynomial $p$
such that for all $n \in \setN$ there exist
 $A \in \setR^{p(n) × n}, B \in \setR^{p(n) × p(n)}, b \in \setR^{p(n)}$
such that
\[
  \conv(\{ x \in L : |x| = n\}) = \{ x \in \setR^n \mid \exists y \in \setR^{p(n)} : Ax + By \leq b \}.
\]
By $\mathbf{CF}^{\textrm{enc}} \subseteq \mathbf{CF}$ we denote the subclass of languages, for which
there exist integral matrices $A,B$ and vectors $b$ such that
 $\log( \max\{ \|A\|_{\infty}, \|B\|_{\infty}, \|b\|_{\infty}\}) \leq p(n)$.
\end{definition}

Since any LP of polynomial size and encoding length can be solved in polynomial time, 
it is rather obvious that $\CFenc \subseteq \mathbf{P_{/poly}}$ (see also the remark of Yannakakis~\cite{ExpressingCOproblemsAsLPs-Yannakakis1991}).
However, Theorem~\ref{thm:DescriptionOfX} also provides a slightly stronger claim:
\begin{theorem}
$\mathbf{CF} \subseteq \mathbf{P_{/poly}}$.
\end{theorem}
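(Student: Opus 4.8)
The plan is to show $\mathbf{CF} \subseteq \mathbf{P_{/poly}}$ by using Theorem~\ref{thm:DescriptionOfX} to replace an arbitrary (possibly irrational) compact formulation by one with polynomially-bounded encoding length, which can then be fed to a standard polynomial-time LP-feasibility routine inside the circuit. Fix $L \in \mathbf{CF}$ with associated polynomial $p$. For each input length $n$, the language slice $X_n := \{ x \in L : |x| = n\} \subseteq \{0,1\}^n$ satisfies $\xc(\conv(X_n)) \leq r(n)$ for some $r(n) \leq p(n)$ (if $X_n = \emptyset$ the circuit trivially rejects, so assume $X_n \neq \emptyset$). Apply Theorem~\ref{thm:DescriptionOfX} with $r := \xc(\conv(X_n)) \leq p(n)$: we obtain matrices $\bar A \in \setZ^{(n+r)\times n}$, $\bar U \in (\frac{1}{4r(n+r)\Delta}\setZ_{\geq 0})^{(n+r)\times r}$ and $\bar b \in \setZ^{n+r}$, all with $\infty$-norm at most $\Delta = (\sqrt{n+1})^{n+1} \leq 2^{n\log(2n)}$, such that
\[
  X_n = \Big\{ x \in \{0,1\}^n \;\Big|\; \exists y \in [0,\Delta]^r : \|\bar A x + \bar U y - \bar b\|_\infty \leq \tfrac{1}{4(n+r)} \Big\}.
\]
The crucial point is that every entry of $\bar A, \bar b$ is an integer of absolute value at most $\Delta$, and every entry of $\bar U$ is a rational of the form $k/(4r(n+r)\Delta)$ with $|k| \leq 4r(n+r)\Delta^2$; hence each such number has encoding length $O(\log \Delta) = O(n \log n)$, which is polynomial in $n$. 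Likewise the right-hand side bounds $\frac{1}{4(n+r)}$ and $\Delta$ and $0$ have polynomial encoding length.

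Next I would hard-wire this system into a nonuniform circuit. The advice string for length $n$ is the tuple $(\bar A, \bar U, \bar b)$ together with $r$ and $n$; by the encoding-length bound just established, this advice has length $\mathrm{poly}(n)$, as required for a $\mathbf{P_{/poly}}$ advice. On input $x \in \{0,1\}^n$, the circuit needs to decide whether there exists $y \in [0,\Delta]^r$ with $\|\bar A x + \bar U y - \bar b\|_\infty \leq \frac{1}{4(n+r)}$. For each fixed $x \in \{0,1\}^n$ this is a linear program in the variables $y$ (the membership predicate is just a system of $2(n+r)$ linear inequalities in $y$ plus the box constraints $0 \leq y_i \leq \Delta$), and all its coefficients have polynomial encoding length. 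By standard results (e.g. Khachiyan's ellipsoid algorithm, or Karmarkar's method), feasibility of a rational LP of this size can be decided in time polynomial in the encoding length, hence in $\mathrm{poly}(n)$; converting that polynomial-time algorithm into a polynomial-size Boolean circuit (one per input length) is routine and is exactly the content of the inclusion $\mathbf{P} \subseteq \mathbf{P_{/poly}}$. The resulting circuit family, equipped with the advice $(\bar A,\bar U,\bar b)$, decides $L$ on inputs of length $n$, because the displayed identity says precisely that $x \in X_n = L \cap \{0,1\}^n$ iff the LP is feasible.

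The only subtle point, and the one I would treat carefully, is consistency: Theorem~\ref{thm:DescriptionOfX} characterizes membership only over the Boolean cube $\{0,1\}^n$, not over all of $\setR^n$, so the circuit must first verify that the input is genuinely a $0/1$ string (trivial, since inputs are bit strings) and then the equivalence applies directly. There is no circularity in invoking $r = \xc(\conv(X_n))$: although $\xc$ is not something the circuit computes, it is a fixed number depending only on $n$, and the advice-giver — who is allowed to be arbitrarily powerful — supplies the correct $(\bar A,\bar U,\bar b)$ for that value of $r$. I do not expect a genuine obstacle here; the entire argument is a packaging of Theorem~\ref{thm:DescriptionOfX} (which does the real work of discretization) together with the textbook facts that polynomial-encoding-length LPs are solvable in polynomial time and that $\mathbf{P} \subseteq \mathbf{P_{/poly}}$. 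One should just remark that this strengthens the obvious $\CFenc \subseteq \mathbf{P_{/poly}}$ precisely by removing the a priori encoding-length hypothesis, which is exactly what Theorem~\ref{thm:DescriptionOfX} buys us.
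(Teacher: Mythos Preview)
Your proposal is correct and follows essentially the same approach as the paper: use Theorem~\ref{thm:DescriptionOfX} to produce a polynomially-encoded system $(\bar A,\bar U,\bar b)$ as the nonuniform advice for each input length, then decide membership of a given $x\in\{0,1\}^n$ by testing feasibility of the resulting polynomial-size rational LP via Khachiyan's algorithm. The paper's own proof is terser but identical in substance (it phrases the argument as a Turing machine with polynomial advice rather than as a circuit family, and omits the careful encoding-length bookkeeping you spell out).
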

\begin{proof}
Let $L \in \mathbf{CF}$ and $X = L \cap \{ 0,1\}^n$ for some $n\in \setN$ and let $r := \xc(\conv(X))$.
Recall that $r$ must be polynomial in $n$. 
It suffices to provide a 
Turing machine that takes polynomial advice (see~\cite{ComputationalComplexity-AroraBarak2009}).
Our advice for all input strings $x$ of length $n$ consists in the matrices $\bar{A},\bar{U},\bar{b}$ 
provided by Theorem~\ref{thm:DescriptionOfX}.
Note that their encoding length is bounded by a polynomial in $n$ and $r$. To verify whether $x \in X$, 
we simply test whether the following polynomial size linear system has a solution $y$:
\begin{eqnarray*}
  - \frac{1}{4r(n+r)} \leq \bar{A}x + \bar{U}y - \bar{b} &\leq& \frac{1}{4r(n+r)} \\
  0 \leq y_j &\leq& \Delta \quad \forall j=1,\ldots,r
\end{eqnarray*}
This can be done in polynomial time~\cite{Khachiyan79}.
\end{proof}



We make the following conjecture:
\begin{conjecture}
$\CFenc = \CF$.
\end{conjecture}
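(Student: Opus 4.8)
The plan is to show that any compact formulation with real coefficients can be replaced by one with integral coefficients of polynomially bounded magnitude, which is exactly the additional requirement defining $\CFenc$. The starting point is Theorem~\ref{thm:DescriptionOfX}: given $L \in \CF$ and $X = L \cap \{0,1\}^n$ with $r := \xc(\conv(X))$ polynomial in $n$, we already obtain matrices $\bar A \in \setZ^{(n+r)\times n}$, $\bar U \in (\tfrac{1}{4r(n+r)\Delta}\setZ_{\geq 0})^{(n+r)\times r}$ and $\bar b \in \setZ^{n+r}$ with all $\infty$-norms at most $\Delta \leq 2^{n\log(2n)}$, such that $X$ is recovered by the approximate feasibility test $\|\bar A x + \bar U y - \bar b\|_\infty \leq \tfrac{1}{4(n+r)}$ over $y \in [0,\Delta]^r$. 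First I would clear denominators: multiply the entire system $\bar A x + \bar U y - \bar b$ through by $4r(n+r)\Delta$, so that all coefficients become integers bounded in absolute value by $4r(n+r)\Delta^2 \leq 2^{O(n\log n)}$, hence of polynomial encoding length; the error tolerance becomes $r\Delta$.

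The remaining — and genuinely nontrivial — gap is that Theorem~\ref{thm:DescriptionOfX} produces a description of the \emph{set} $X$ via an approximate membership test, not a description of the \emph{polytope} $\conv(X)$ as an exact projection of a polyhedron, which is what the definition of $\CF$ (and $\CFenc$) literally demands. So the second step is to pass from the approximate $0/1$-membership certificate to an exact extended formulation with integral bounded coefficients. Here I would invoke Theorem~\ref{thm:Approximation-P-by-Q}: for a suitable $\varepsilon$ it yields a polytope $Q$ with $\proj_x(Q)$ sandwiched between $P$ and $P+\varepsilon$, with coefficients of encoding length $\mathrm{poly}(n,\xc(P),\log\tfrac1\varepsilon)$. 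Choosing $\varepsilon$ smaller than the reciprocal of the largest subdeterminant governing the facets of $\conv(X)$ — concretely $\varepsilon < 1/((n+1)\Delta)$ or so — forces every facet of $\conv(X)$, which has the form $A_i x \leq b_i$ with $\|A_i\|_\infty, |b_i| \leq \Delta$ integral, to be ``rounded back'' exactly: any $0/1$ point violating $A_i x \leq b_i$ violates it by at least $1 \gg \varepsilon\|A_i\|_2$, so $\proj_x(Q) \cap \{0,1\}^n = X$ and, since $P \subseteq \proj_x(Q) \subseteq P+\varepsilon$ and $P$ is a $0/1$ polytope, in fact $\conv(\proj_x(Q)) = P$. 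Then clearing denominators in the (rational, polynomial-encoding-length) description of $Q$ as in the previous step gives integral coefficients bounded by $2^{\mathrm{poly}(n)}$, witnessing $L \in \CFenc$.

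The step I expect to be the main obstacle is making the sandwich $P \subseteq \proj_x(Q) \subseteq P+\varepsilon$ actually \emph{equal} $\proj_x(Q) = P$ at the level of polytopes rather than merely at the level of $0/1$ points, since $\CF$ asks for $\conv(\{x \in L : |x|=n\})$ to equal the projection on the nose, and $P+\varepsilon$ strictly contains $P$. The resolution I would pursue is that $\CF$ membership only constrains the \emph{convex hull of the lattice points} the formulation is supposed to capture; one should argue that if $\proj_x(Q)$ lies within $\varepsilon$ of $P$ with $\varepsilon$ smaller than any relevant combinatorial quantity, then intersecting with the cube and re-taking the convex hull — or equivalently, adding the (integral, bounded) facet inequalities of $P$ itself to the system and re-deriving — recovers $P$ exactly without destroying the projection property, because those inequalities are already valid for $P$ and hence for $P \subseteq \proj_x(Q)$ only need tightening on the $\varepsilon$-collar, which contains no new $0/1$ points. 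One must check that appending $\conv(X)$'s own facets to $Q$ (in the $x$-variables) does not increase the number of inequalities beyond a polynomial — it does not, since $\conv(X)$ has at most $2^{O(n\log n)}$... — here a subtlety arises: $\conv(X)$ may have exponentially many facets, so instead one keeps only $Q$ and argues directly that $\conv(\proj_x(Q)) = P$ from the two inclusions plus $0/1$-integrality of $P$'s vertices, which is the cleanest route and the one I would write up.
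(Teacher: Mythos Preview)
This statement is posed in the paper as an open \emph{conjecture}; the paper gives no proof, so there is nothing to compare your attempt against. What remains is to check whether your argument actually settles the question, and it does not --- the gap you yourself flag is real and your proposed resolution does not close it.

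Concretely: from Theorem~\ref{thm:Approximation-P-by-Q} you obtain a polytope $Q$ with short rational description and $P \subseteq \proj_x(Q) \subseteq P+\varepsilon$. But $\proj_x(Q)$ is a projection of a polytope, hence already a polytope and in particular already convex; thus $\conv(\proj_x(Q)) = \proj_x(Q)$, which for every $\varepsilon>0$ may be strictly larger than $P$. So the ``cleanest route'' you end with --- deducing $\conv(\proj_x(Q))=P$ from the sandwich plus $0/1$-integrality of $P$'s vertices --- is simply false as stated: the sandwich only tells you $\proj_x(Q)\cap\{0,1\}^n = X$, which is a statement about lattice points, not about the polytope. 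The definition of $\CFenc$ requires the projection to equal $\conv(X)$ exactly, and you have not produced such a $Q$.

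Your alternative --- intersecting $Q$ with the facet description of $\conv(X)$ in the $x$-variables --- would give the correct projection, but, as you note, $\conv(X)$ can have exponentially many facets, so the resulting system is no longer of polynomial size. This is precisely the obstruction that keeps the statement a conjecture: Theorems~\ref{thm:DescriptionOfX} and~\ref{thm:Approximation-P-by-Q} show that a short-encoding system can \emph{recognize} $X$ and can \emph{approximate} $\conv(X)$ arbitrarily well, but neither yields a short-encoding extended formulation whose projection is $\conv(X)$ on the nose. Bridging that last step is the content of the conjecture, and nothing in your proposal supplies the missing idea.
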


One of the most popular polytopes in the literature is the \emph{TSP polytope}~(see e.g. \cite{ExpressingCOproblemsAsLPs-Yannakakis1991,All01polytopesAppearAsFacesOfTheTSPpolytope1996}), hence we want
to discuss how it relates to the class $\CF$.
Let $K_n$ be the complete undirected graph on $n$ nodes. We define a language
\[
\texttt{TSP} = \bigcup_{n \in \setN} \{ \chi(C) \in \setR^{n \choose 2} \mid C\subseteq E_n \textrm{ is Hamiltonian cycle in } K_n=([n],E_n) \}
\]
(here $\chi(C)$ denotes the characteristic vector of $C$).
Again it is obvious that $\mathbf{NP} \not\subseteq \mathbf{P_{/poly}} \Rightarrow \texttt{TSP} \notin \CFenc$, but also here we
can show a slightly stronger claim:
\begin{theorem} \label{thm:NoCFforTSP}
$\mathbf{NP} \not\subseteq \mathbf{P_{/poly}} \Rightarrow \texttt{TSP} \notin \CF$.
In other words, unless $\mathbf{NP}$ problems do not all have polynomial size circuits,
 the TSP polytope does not have a compact formulation, even if arbitrary real numbers are allowed.
\end{theorem}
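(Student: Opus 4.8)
The strategy is to combine the already-established containment $\CF \subseteq \mathbf{P_{/poly}}$ (the preceding theorem) with the classical fact that the TSP polytope ``captures'' arbitrary $\mathbf{NP}$ problems via polynomial-time reductions, so that a compact formulation for $\texttt{TSP}$ would pull back to polynomial-size circuits for all of $\mathbf{NP}$. The contrapositive formulation makes this cleanest: I assume $\texttt{TSP} \in \CF$ and derive $\mathbf{NP} \subseteq \mathbf{P_{/poly}}$.

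First I would fix an arbitrary language $L \in \mathbf{NP}$; it suffices to show $L \in \mathbf{P_{/poly}}$. By the preceding theorem, $\texttt{TSP} \in \CF$ implies $\texttt{TSP} \in \mathbf{P_{/poly}}$, i.e. there is a polynomial-size circuit family $\{D_N\}$ deciding, for an input of length $N = \binom{n}{2}$, whether it is the characteristic vector of a Hamiltonian cycle in $K_n$. The heart of the argument is the standard fact (this is essentially why the TSP polytope is singled out, cf.~\cite{All01polytopesAppearAsFacesOfTheTSPpolytope1996}) that deciding Hamiltonicity --- equivalently, membership of a $0/1$ vector in $\texttt{TSP}$ --- is $\mathbf{NP}$-complete, so $L$ reduces to $\texttt{TSP}$ under a polynomial-time many-one (Karp) reduction $f$: on input $w$ of length $m$, $f(w)$ is a string of length $N(m)$ polynomial in $m$, and $w \in L \iff f(w) \in \texttt{TSP}$. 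One subtlety to address: the Karp reduction must land on genuine candidate characteristic vectors of the right dimension, but since the language $\texttt{TSP}$ as defined already requires the input to be a characteristic vector of an edge set of $K_n$ for the appropriate $n$, and the reduction from an $\mathbf{NP}$-complete Hamiltonian-cycle problem produces exactly such instances, this is handled by composing with the textbook reduction of $L$ to \textsc{Hamiltonian Cycle}.

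Then I would assemble the circuit for $L$ at input length $m$: precompute $f$ on all inputs (it is polynomial-time, hence expressible by a polynomial-size circuit $E_m$ by the usual simulation of $\mathbf{P}$ by $\mathbf{P_{/poly}}$), feed the output into $D_{N(m)}$, and output the result. The composition $D_{N(m)} \circ E_m$ is a polynomial-size circuit deciding $L$ on length-$m$ inputs, so $L \in \mathbf{P_{/poly}}$. Since $L \in \mathbf{NP}$ was arbitrary, $\mathbf{NP} \subseteq \mathbf{P_{/poly}}$, which is the contrapositive of the claim.

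\textbf{Main obstacle.} Nothing here is technically deep once $\CF \subseteq \mathbf{P_{/poly}}$ is in hand; the only point requiring care is the reduction bookkeeping --- specifically, ensuring that the many-one reduction from an arbitrary $\mathbf{NP}$ language produces inputs that are exactly length-$N$ strings over $\{0,1\}$ interpreted as edge-indicator vectors of $K_n$, so that the circuit family $\{D_N\}$ for $\texttt{TSP}$ can be invoked as a black box. This is routine but is the step where the definition of the language $\texttt{TSP}$ (as opposed to the polytope) is actually used, and it is worth stating explicitly that the reduction factors through the $\mathbf{NP}$-completeness of \textsc{Hamiltonian Cycle} on complete graphs with edge weights (or equivalently recognizing characteristic vectors of Hamiltonian cycles), which is classical.
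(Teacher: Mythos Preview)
There is a genuine gap: you conflate two different problems. The language $\texttt{TSP}$ as defined in the paper is the set of characteristic vectors of Hamiltonian cycles in the \emph{complete} graph $K_n$. Testing membership of a given $0/1$ vector $x$ in $\texttt{TSP}$ is trivially in $\mathbf{P}$: one just checks that every vertex has degree exactly $2$ and that the selected edges form a single cycle. The paper states this explicitly immediately after the theorem (``$\texttt{TSP} \in \mathbf{P_{/poly}}$, since \emph{testing} whether $x$ is the characteristic vector of a Hamiltonian cycle is easy''). So invoking $\CF \subseteq \mathbf{P_{/poly}}$ to conclude $\texttt{TSP} \in \mathbf{P_{/poly}}$ buys nothing --- that was already true unconditionally --- and there is no Karp reduction from an arbitrary $\mathbf{NP}$ language to $\texttt{TSP}$-membership, because the latter is not $\mathbf{NP}$-hard. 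The $\mathbf{NP}$-hard problem is \emph{optimization} over $\texttt{TSP}$ (equivalently, deciding whether a given subgraph contains a Hamiltonian cycle), not membership.

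The paper's proof therefore takes a genuinely different route: it does not go through $\CF \subseteq \mathbf{P_{/poly}}$ at all, but instead uses the approximation Theorem~\ref{thm:Approximation-P-by-Q}. Assuming $\texttt{TSP} \in \CF$, that theorem produces, for any $\varepsilon>0$, a polytope $Q$ with polynomial encoding length whose projection $\varepsilon$-approximates the TSP polytope. A Turing machine with $Q$ as advice can then \emph{optimize} a cost vector $c \in \{1,2\}^{\binom{n}{2}}$ over $Q$ in polynomial time; with $\varepsilon = \frac{1}{2n}$ the resulting value distinguishes ``minimum tour cost $\leq n$'' from ``minimum tour cost $\geq n+1$'', which is $\mathbf{NP}$-hard. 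This is where the content lies: a compact formulation gives you (approximate) optimization, not just membership, and Theorem~\ref{thm:Approximation-P-by-Q} is precisely what controls the encoding length so that the optimization can be carried out in $\mathbf{P_{/poly}}$.
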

\begin{proof}
Suppose for the sake of contradiction that $\texttt{TSP} \in \CF$. 
By $\mathbf{NP}$-hardness of the \emph{Hamiltonian Cycle problem}~\cite{GareyJohnson79}, 
given a cost vector $c \in \{ 1,2\}^{{n \choose 2}}$ it
is $\mathbf{NP}$-hard to decide, whether there is an $x \in \texttt{TSP}$ with $c^Tx \leq n$. 
Consider the Turing machine (taking polynomial advice), which optimizes $c$ over
the polytope $Q$ from Theorem~\ref{thm:Approximation-P-by-Q} for $\varepsilon := \frac{1}{2n}$ 
and let $x^*$ be an optimum fractional solution. 
If there is an $x \in \texttt{TSP}$ with $c^Tx \leq n$, then $c^Tx^* \leq n$.
Otherwise, $c^Tx^* \geq (n+1) - \varepsilon \|c\|_2 > n$. Hence the Turing machine
decides an $\mathbf{NP}$-hard problem, which implies the claim.
\end{proof}
Note that $\texttt{TSP} \in \mathbf{P_{/poly}}$, since \emph{testing} whether $x$ is
the characteristic vector of a Hamiltonian cycle
is easy. Just \emph{optimizing} over all those vectors is difficult. 

We should not introduce a new complexity class $\CF$, without relating it to already known ones. 
We saw already that $\CF \subseteq \mathbf{P_{/poly}}$, so what about other non-uniform complexity classes 
within $\mathbf{P_{/poly}}$? Certainly the most studied of those classes is $\mathbf{AC}^0$,
which is the set of languages for which there are circuits with bounded
depth and unbounded fan-in.

Recall that $\tt{PARITY}$ is the set of all $x \in \{ 0,1\}^{*}$ such $\| x\|_1$ is odd. 
Then $\texttt{PARITY}$ admits 
a compact formulation (with small integral coefficients; 
see e.g.~\cite{ExtendedFormulationsSurvey-ConfortiCornuejolsZambelli-2010}), thus $\texttt{PARITY} \in \mathbf{CF}^{\textrm{enc}}$. 
In a seminal result,  Furst, Saxe and Sipser~\cite{ParityNotInAC0-FurstSaxeSipser84} showed that $\texttt{PARITY} \notin \mathbf{AC}^0$ and 
hence $\CF \not\subseteq \mathbf{AC}^0$ (in fact, even $\mathbf{CF}^{\textrm{enc}} \not\subseteq \mathbf{AC}^0 $).
On the other hand, under widely believed assumptions also the reverse is true:
\begin{theorem}
$\mathbf{NP} \not\subseteq \mathbf{P_{/poly}} \Rightarrow \mathbf{AC}^0 \not\subseteq \mathbf{CF} $.
\end{theorem}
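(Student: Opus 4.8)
The plan is to pin down a single language $L \in \mathbf{AC}^0$ for which $L \in \mathbf{CF}$ would force an $\mathbf{NP}$-complete problem into $\mathbf{P_{/poly}}$; the contrapositive is exactly the statement. For $L$ I would take (a canonical encoding of) the verification relation of $3$-SAT: strings $\mathrm{enc}(\phi)\cdot a$ where $\phi$ is a $3$-CNF formula on $m$ variables written in a fixed canonical format, so that the total length $N = N(m)$ is a fixed polynomial in $m$, $a \in \{0,1\}^m$ is an assignment, and $\phi(a) = 1$. This relation is decidable in $\mathbf{AC}^0$: the value of literal $\ell$ of clause $j$ under $a$ is computed by a constant-depth multiplexer that reads the $O(\log m)$-bit variable index stored in $\mathrm{enc}(\phi)$ and selects the corresponding bit of $a$ (an $\mathrm{OR}$ over the $m$ candidate indices of an $\mathrm{AND}$ of $O(\log m)$ input literals, combined with the stored sign bit); each clause is the $\mathrm{OR}$ of its three literal values; and $\phi(a) = 1$ is the $\mathrm{AND}$ over all clauses. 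This is constant depth, polynomial size, unbounded fan-in, so $L \in \mathbf{AC}^0$. (Alternatively one may invoke the folklore fact that every $\mathbf{NP}$ language has an $\mathbf{AC}^0$ verifier, via the Cook--Levin tableau.)

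Assume now $\mathbf{AC}^0 \subseteq \mathbf{CF}$, so $L \in \mathbf{CF}$ and $r_N := \xc(\conv(L \cap \{0,1\}^N))$ is polynomial in $N$, hence in $m$. Fix $m$, put $N := N(m)$ and $P := \conv(L \cap \{0,1\}^N)$, which is non-empty because a tautological formula together with any assignment lies in this slice. I would apply Theorem~\ref{thm:Approximation-P-by-Q} with $\varepsilon := \frac{1}{3\sqrt{N}}$ to get a polytope $Q \subseteq \setR^{N} \times \setR^{r_N}$ with $O(N + r_N)$ facets, rational coefficients of encoding length $\mathrm{poly}(N, r_N, \log\frac{1}{\varepsilon}) = \mathrm{poly}(m)$, $P \subseteq \proj_x(Q) \subseteq P + \varepsilon$, and $\max\{c^Tx \mid x \in \proj_x(Q)\} - \max\{c^Tx \mid x \in P\} \leq \varepsilon\|c\|_2$ for every $c \in \setR^N$. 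The description of $Q$ (the rational approximation, rather than the possibly-irrational formulation that $L \in \mathbf{CF}$ merely guarantees to exist) is the polynomial advice for all formulas on $m$ variables.

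Given an instance $\phi$, let $p = \mathrm{enc}(\phi) \in \{0,1\}^{n_1}$ (with $n_1 \leq N$) be the formula-part of the string, and take $c \in \setR^N$ with $c_i := 2p_i - 1$ for $i \leq n_1$ and $c_i := 0$ otherwise. For $x \in \{0,1\}^N$ one has $c^Tx = \|p\|_1 - (\text{number of } i \leq n_1 \text{ with } x_i \neq p_i)$, so $\max\{c^Tx \mid x \in P\}$ equals $\|p\|_1$ when the slice of $L$ contains a point whose first $n_1$ coordinates are $p$ --- i.e.\ exactly when $\phi$ is satisfiable --- and is at most $\|p\|_1 - 1$ otherwise. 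Since $\|c\|_2 = \sqrt{n_1} \leq \sqrt{N}$, we have $\varepsilon\|c\|_2 \leq \frac{1}{3}$, and hence $\max\{c^Tx \mid (x,y) \in Q\}$ lies in $[\|p\|_1,\,\|p\|_1 + \frac13]$ if $\phi$ is satisfiable and is at most $\|p\|_1 - \frac23$ otherwise. The advice-taking machine forms $c$, solves the rational linear program $\max\{c^Tx \mid (x,y) \in Q\}$ exactly in polynomial time~\cite{Khachiyan79}, and compares the optimum with the threshold $\|p\|_1 - \frac12$; this decides satisfiability of $\phi$. Thus $3$-SAT, and therefore every problem in $\mathbf{NP}$, lies in $\mathbf{P_{/poly}}$, contradicting the hypothesis.

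The main work --- none of it deep --- lies in the setup of $L$: choosing the canonical encoding of $3$-CNF formulas so that the variable-index decoding is genuinely a constant-depth multiplexer (hence $L \in \mathbf{AC}^0$) while a standard padding argument keeps ``is this canonically encoded formula satisfiable'' $\mathbf{NP}$-hard, and making sure each slice $L \cap \{0,1\}^{N(m)}$ is non-empty so that Theorem~\ref{thm:Approximation-P-by-Q} applies. The remaining pieces --- the margin computation and the polynomial-time LP step --- are exactly as in the proof of Theorem~\ref{thm:NoCFforTSP}.
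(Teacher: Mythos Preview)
Your argument is correct and follows the same template as the paper's proof---exhibit an $\mathbf{AC}^0$ language over whose polytope an $\mathbf{NP}$-hard decision can be read off from a linear optimum, then invoke Theorem~\ref{thm:Approximation-P-by-Q} exactly as in Theorem~\ref{thm:NoCFforTSP}---but your choice of language differs. The paper takes $\texttt{3DM}$, the characteristic vectors of $3$-dimensional matchings in the complete tripartite hypergraph; its $\mathbf{AC}^0$ membership is a one-line depth-$2$ formula $\bigwedge_{e,e':1\le |e\cap e'|\le 2}(\lnot x_e\lor\lnot x_{e'})$, and the $\mathbf{NP}$-hard question ``is there a perfect matching inside $\{e:c_e=1\}$?'' is directly $\max\{c^Tx:x\in P\}\stackrel{?}{=}n$ for a $0/1$ cost vector. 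Your language is the $3$-SAT verification relation, which needs the multiplexer argument for $\mathbf{AC}^0$ and the $\pm1$ objective to pin down the formula prefix; both steps are fine but add length. What you gain is a construction that works straight from $3$-SAT without locating a combinatorially convenient problem; what the paper gains is that everything collapses to two sentences. The LP/margin endgame is identical in both proofs.
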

\begin{proof}
We need to exhibit a problem, which can be solved by constant depth circuits, but is likely not
to be in $\CF$. Consider the complete tripartite graph $G_n=([n]^3,E_n)$, i.e. for any distinct $i,j,k \in [n]$, one has a
triple $e=\{ i,j,k\} \in E_n$. We say that a subset $E' \subseteq E_n$ is a \emph{(3-dimensional) matching} if all triples
in $E'$ are disjoint. Define
\[
  \texttt{3DM} = \bigcup_{n \geq 1} \{ \chi(E') \mid E' \subseteq E_n \textrm{ is matching} \}
\]
Given a cost vector $c \in \{ 0,1\}^{E_n}$, it is  $\mathbf{NP}$-hard to decide, whether there is an $x \in \texttt{3DM}$ with $c^Tx = n$~\cite{GareyJohnson79}
(i.e. whether there is a perfect 3-dimensional matching contained in $\{ e \in E \mid c_e = 1\}$).
Within the same line of arguments as in Theorem~\ref{thm:NoCFforTSP} one has $\texttt{3DM} \notin \CF$ unless $\mathbf{NP} \subseteq \mathbf{P_{/poly}}$. 
Finally it is not difficult to see that 
\[
 \bigwedge_{e, e' \in E: 1\leq |e\cap e'|\leq 2}  (¬ x_e \lor ¬ x_{e'})
\]
is a polynomial size, constant depth formula for $\texttt{3DM}$, thus $\texttt{3DM} \in \mathbf{AC}^0$.
\end{proof}

\paragraph{Acknowledgements.}

The author is grateful to Samuel Fiorini for carefully reading a preliminary 
draft. 
Furthermore the author wants to thank Michel X. Goemans, Neil Olver and Rico Zenklusen for helpful comments.

\bibliographystyle{alpha}
\bibliography{sizeOfExtendedFormulations}

\begin{thebibliography}{{Kai}11}

\bibitem[AB09]{ComputationalComplexity-AroraBarak2009}
S.~Arora and B.~Barak.
\newblock {\em Computational complexity}.
\newblock Cambridge University Press, Cambridge, 2009.
\newblock A modern approach.

\bibitem[Bal85]{DisjunctiveProgramming-Balas85}
E.~Balas.
\newblock Disjunctive programming and a hierarchy of relaxations for discrete
  optimization problems.
\newblock {\em SIAM J. Algebraic Discrete Methods}, 6(3):466--486, 1985.

\bibitem[Bal98]{DisjunctiveProgramming-Balas98}
E.~Balas.
\newblock Disjunctive programming: properties of the convex hull of feasible
  points.
\newblock {\em Discrete Appl. Math.}, 89(1-3):3--44, 1998.

\bibitem[Bar93]{CompactForm4MatchingPolytopeInPlanarGraphs-Baharona1993}
F.~Barahona.
\newblock On cuts and matchings in planar graphs.
\newblock {\em Mathematical Programming}, 60:53--68, 1993.
\newblock 10.1007/BF01580600.

\bibitem[Bie08]{ApproximateKnapsackFormulation-Bienstock2008}
D.~Bienstock.
\newblock Approximate formulations for 0-1 knapsack sets.
\newblock {\em Oper. Res. Lett.}, 36(3):317--320, 2008.

\bibitem[BS96]{All01polytopesAppearAsFacesOfTheTSPpolytope1996}
Louis~J. Billera and A.~Sarangarajan.
\newblock All {$0$}-{$1$} polytopes are traveling salesman polytopes.
\newblock {\em Combinatorica}, 16(2):175--188, 1996.

\bibitem[CCZ10]{ExtendedFormulationsSurvey-ConfortiCornuejolsZambelli-2010}
M.~Conforti, G.~Cornu{\'e}jols, and G.~Zambelli.
\newblock Extended formulations in combinatorial optimization.
\newblock {\em 4OR: A Quarterly Journal of Operations Research}, 8:1--48, 2010.
\newblock 10.1007/s10288-010-0122-z.

\bibitem[Duk03]{DoublyExponentiallyManyMatroids-Dukes2003}
W.~M.~B. Dukes.
\newblock Bounds on the number of generalized partitions and some applications.
\newblock {\em Australas. J. Combin.}, 28:257--261, 2003.

\bibitem[Edm65]{matchingPolytop-Edmonds1965}
J.~Edmonds.
\newblock Maximum matching and a polyhedron with {$0,1$}-vertices.
\newblock {\em J. Res. Nat. Bur. Standards Sect. B}, 69B:125--130, 1965.

\bibitem[FSS84]{ParityNotInAC0-FurstSaxeSipser84}
M.~Furst, J.~B. Saxe, and M.~Sipser.
\newblock Parity, circuits, and the polynomial-time hierarchy.
\newblock {\em Math. Systems Theory}, 17(1):13--27, 1984.

\bibitem[Ger91]{CompactLPforPerfectMatchingOnBoundedGenusSurfacesGerards1991}
A.~M.~H. Gerards.
\newblock Compact systems for t-join and perfect matching polyhedra of graphs
  with bounded genus.
\newblock {\em Operations Research Letters}, 10(7):377 -- 382, 1991.

\bibitem[GJ79]{GareyJohnson79}
M.~R. Garey and D.~S. Johnson.
\newblock {\em Computers and Intractability: {A} Guide to the Theory of
  {NP}-Completeness}.
\newblock W. H. Freeman and Company, New York, New York, 1979.

\bibitem[Goe10]{PermutahedronNlogNformulation_Goemans10}
M.~Goemans.
\newblock Smallest compact formulation for the permutahedron.
\newblock Working paper. http://math.mit.edu/~goemans/PAPERS/permutahedron.pdf,
  2010.

\bibitem[{Kai}11]{ExtendedFormulationsInCombOptKaibel2011arXiv}
V.~{Kaibel}.
\newblock {Extended Formulations in Combinatorial Optimization}.
\newblock {\em ArXiv e-prints}, April 2011.

\bibitem[Kha79]{Khachiyan79}
L.G. Khachiyan.
\newblock A polynomial algorithm for linear programming.
\newblock {\em Soviet Math. Doklady}, 20:191--194, 1979.
\newblock ({Russian original in Doklady Akademiia Nauk SSSR, 244:1093--1096}).

\bibitem[KPT10]{SymmetryMatters-KaibelPashkovichTheis-IPCO2010}
V.~Kaibel, K.~Pashkovich, and D.~O. Theis.
\newblock Symmetry matters for the sizes of extended formulations.
\newblock In {\em IPCO}, pages 135--148, 2010.

\bibitem[Mar91]{CF4SpanningTree-KippMartin1991}
R.~Kipp Martin.
\newblock Using separation algorithms to generate mixed integer model
  reformulations.
\newblock {\em Operations Research Letters}, 10(3):119 -- 128, 1991.

\bibitem[Sch03]{CombinatorialOptimizationABC-Schrijver}
A.~Schrijver.
\newblock {\em Combinatorial optimization. {P}olyhedra and efficiency. {V}ol.
  {A,B,C}}, volume~24 of {\em Algorithms and Combinatorics}.
\newblock Springer-Verlag, Berlin, 2003.

\bibitem[Sha49]{CircuitSizesShannon1949}
C.~E. Shannon.
\newblock The synthesis of two-terminal switching circuits.
\newblock {\em Bell System Tech. J.}, 28:59--98, 1949.

\bibitem[Vav09]{ComplexityOfNonnegativeMatrixFactorizationVavasis09}
S.~A. Vavasis.
\newblock On the complexity of nonnegative matrix factorization.
\newblock {\em SIAM Journal on Optimization}, 20(3):1364--1377, 2009.

\bibitem[Yan91]{ExpressingCOproblemsAsLPs-Yannakakis1991}
M.~Yannakakis.
\newblock Expressing combinatorial optimization problems by linear programs.
\newblock {\em Journal of Computer and System Sciences}, 43(3):441 -- 466,
  1991.

\bibitem[Zie00]{LecturesOn0-1Polytopes-Ziegler}
G.~M. Ziegler.
\newblock Lectures on {$0/1$}-polytopes.
\newblock In {\em Polytopes---combinatorics and computation ({O}berwolfach,
  1997)}, volume~29 of {\em DMV Sem.}, pages 1--41. Birkh\"auser, Basel, 2000.

\end{thebibliography}

\end{document}